\newcommand{\R}{\mathbb R}
\title{The use of discrete gradient methods for total variation type regularization problems in image processing}
\author{V. Grimm\thanks{
Department of Mathematics,
Karlsruhe Institute of Technology (KIT),
	D--76128 Karlsruhe,
        Germany, {\tt email:volker.grimm@kit.edu}.
}
\and R. I. McLachlan\thanks{
Institute of Fundamental Sciences, 
Massey University, Private Bag 11-222, Palmerston North, New Zealand,
{\tt email:r.mclachlan@massey.ac.nz}.}	
\and D. I. McLaren\thanks{Department of Mathematics and Statistics, La Trobe University, Victoria 3086, Australia, {\tt email:\{D.Mclaren, r.quispel\}@latrobe.edu.au}.}
\and 
G. R. W. Quispel\footnotemark[3]
\and C.-B. Sch\"onlieb\thanks{Department of Applied Mathematics and Theoretical Physics (DAMTP), University of Cambridge, Wilberforce Road, Cambridge CB3 0WA, United Kingdom, {\tt email:cbs@cam.ac.uk}.}}	
\begin{document}
\maketitle
\newcommand{\slugmaster}{}%

\begin{abstract}
Discrete gradient methods are well-known methods of Geometric Numerical Integration, which preserve the dissipation of 
gradient systems.  The preservation of the dissipation of a system is an important feature in numerous image processing tasks. We promote the use of discrete gradient methods in image processing by exhibiting experiments with nonlinear total variation (TV) deblurring, denoising, and inpainting.
\end{abstract}

\begin{keywords}
Gradient system, gradient flow, discrete gradient, discrete gradient method, geometric numerical integration, total variation deblurring, total variation denoising, total variation inpainting
\end{keywords}

\begin{AMS} 94A08, 37N30, 65D18. \end{AMS}

\pagestyle{myheadings}
\thispagestyle{plain}
\markboth{V.~Grimm, R.~McLachlan, D.~McLaren, G.R.W.~Quispel and C.-B.~Sch\"onlieb}{Discrete gradients in image processing}

\section{Introduction} \label{sec:Intro}

The discrete gradient method is mainly interesting as a computational method that preserves the underlying geometric structure of a gradient flow. Since we always end up with a problem in $\R^n$ after discretization, we discuss the properties of the method in $\R^n$ with some inner product $\langle \cdot, \cdot \rangle$. Given a differentiable functional $V\,:\,\R^n\rightarrow \R$, a gradient flow is the solution of the initial value problem
\begin{equation} \label{gradflow}
\dot{x}= - \nabla V (x), \qquad x(0)=x_0\,,
\end{equation}
where the dot represents differentiation with respect to time.
We have the immediate consequence that the functional is nonincreasing along the solution of the evolution equation~\eqref{gradflow}. Or, more exactly, we have the decay
\begin{equation} \label{decaycontinuous}
\frac{d}{dt} V(x(t)) = \langle \nabla V(x(t)), \dot{x} \rangle =  -\|\nabla V(x(t))\|^2 \leq 0\,.
\end{equation}
Gradient systems of this type appear in many areas of image processing, for example, time-marching schemes, nonlinear diffusion filters such as the Perona-Malik model  (cf. \cite{PMscale}) and many variants thereof, Sobolev gradient flows, image registration (e.g. \cite{GriHeWi06, StrDrRu04}) and some applications of active contours, snakes and level sets (e.g. \cite{ChanVese01,MiRatTan07,XuPri98}). In all applications, the preservation of the decay and the limit solution are the most important aspects. It is not so important to solve the evolution equation as accurately as possible, but to find the equilibrium as exactly as possible. Therefore, the preservation of the dissipative behavior of the evolution equation is very important. This is certainly not a new observation and has been expressed by several authors. For example, in \cite{Weickertbook98}, a thorough discussion of the impact of the preservation of dissipativity in a gradient system with respect to diffusion filtering can be found. If the functional $V$ is non-convex, as in some image processing applications such as sparse $\ell_p$ regularization \cite{nikolova2010fast,hintermüller2013nonconvex} and inpainting or data classification with the Ginzburg-Landau energy \cite{bertozzi2007inpainting,burger2009cahn,bertozzi2012diffuse}, then a decay guarantee within a gradient flow formulation will at least guarantee monotone convergence to a critical point of $V$. Structure preservation is the main topic of Geometric Numerical Integration (e.g.~\cite{GNI2, SixLec}), which has been an active research area over the last two decades. Discrete gradient methods turned out to be especially useful to preserve the dissipative structure of a gradient system. We introduce these methods here and show that they have a great potential to be useful for image processing tasks. We also note that discrete gradient methods allow the use of adaptive time steps. Since in most image processing applications the accuracy with respect to the time evolution of the gradient flow is less important than the desire for good descent directions that take the iterates to the equilibrium fast, adaptive time steps seem particularly attractive. This in turn makes it possible to use large time steps initially, followed by smaller time steps once one approaches the equilibrium.  

The paper is organized as follows: The discrete gradient method as a numerical method in Geometric Numerical Integration is introduced in Section~\ref{sec:DGmethod} together with some of their favorable properties in Section~\ref{sec:PropDG}. In Section~\ref{sec:nonlinexmpl} several experiments with well-known gradient systems in image processing are conducted that illustrate the importance of the preservation of the dissipativity of a gradient system and that indicate that these methods might be useful in image processing.
Finally, a brief conclusion is given in Section~\ref{sec:conclusion}.

\section{Discrete gradient method} \label{sec:DGmethod}
For simplicity, we will use $\R^n$ equipped with the standard inner product and its associated norm. The proofs for this case can be generalized to Hilbert spaces, but for our purposes, where we either think of a digital grayscale image taken by a digital camera, $\R^n$ where $n=N_xN_y$ and $N_x$ and $N_y$ correspond to the number of pixels with respect to width and height of the digitized picture, or an $RGB$ picture where $n=3N_xN_y$ and $N_x$ and $N_y$ correspond to the number of pixels with respect to width and height of the digitized picture for the red, green, and blue color channel will be sufficient.

\begin{definition} \label{defdisgrad}
Let $V\,:\,\R^n \rightarrow \R$ be continuously differentiable. The function $\overline{\nabla}V\,:\, \R^n\times \R^n \rightarrow \R^n$ is a \emph{discrete gradient} of $V$ iff it is continuous and
\[
\left\{
  \begin{array}{rcl}
   \langle \overline{\nabla}V (x,x'), (x'-x)\rangle &=& V(x')-V(x), \\
    \overline{\nabla}V (x,x) &=& \nabla V(x)\,,
  \end{array}  
\right. \qquad \mathrm{for~all} \quad x, x' \in \R^n\,. 
\]
\end{definition}
Note, that this definition is different from what is often understood to be a discrete gradient in image processing where the term just refers to a discretized gradient. Definition~\ref{defdisgrad} asks for a specific condition.
Discrete gradients according to this definition have been studied by many researchers in Geometric Numerical Integration (e.g. \cite{Cie13, CoHai11, AVF12, Gonzales96, HaiLub13, MatFuri01, McLachlanQusipel99, QuiTu96, StuHam96}). 
Three well-known discrete gradients are the \emph{midpoint discrete gradient} or \emph{Gonzalez discrete gradient}
(cf. \cite{Gonzales96})
\begin{equation}\label{eq:DGgonzalez}
\begin{array}{rcl}
  \overline{\nabla}_1 V(x,x') &=& \nabla V 
    \left( 
      \frac{x'+x}{2}
    \right) \\[1ex] 
   & & + \,
  \frac{
    V(x')-V(x)-
        \left\langle \nabla V 
	   \left(\frac{x'+x}{2} \right),x'-x \right\rangle
  }{
    \|x-x'\|^2
  }(x'-x)\,, \qquad (x \not= x'),
\end{array}
\end{equation}
the \emph{mean value discrete gradient}
\[
\overline{\nabla}_2 V(x,x') = \int_0^1 \nabla V((1-s)x+s x')\,ds, \quad 
\]
that is, for example, used in the averaged vector field method (cf.~\cite{AVF12}), and the discrete gradient proposed by Itoh \& Abe (cf. \cite{ItohAbe88}) that reads
\renewcommand{\arraystretch}{1.75}
\begin{align}\label{itoabe:discretegrad}
\overline{\nabla}_3 V(x,x') &= \left(\begin{array}{c}
\frac{V(x_1',x_2,\ldots,x_n)-V(x)}{ x_1'-x_1}\\
\frac{V(x_1',x_2',x_3,\ldots,x_n)-V(x_1',x_2,\ldots,x_n)}{x_2'-x_2}\\
\vdots\\
\frac{ V (x_1',\ldots, x_{n-1}',x_n)-V( x_1',\ldots, x_{n-2}',x_{n-1},x_n)}{\tilde x_{n-1}-x_{n-1}}\\
\frac{V(x')-V(x_1',\ldots,x_{n-1}',x_n)}{x_n'-x_n}
\end{array}\right), \qquad x_i' \not= x_i, \quad i=1,\ldots,n,
\end{align}
\renewcommand{\arraystretch}{1}
Note that the Itoh \& Abe discrete gradient is derivative-free and hence its computational realization relatively cheap. Besides these discrete gradients, there are many more. For the gradient flow
\begin{equation} \label{gradsys}
 \dot{x}=- \nabla V(x), \qquad x(0)=x_0
\end{equation}
every discrete gradient $\overline{\nabla }V$ leads to an associated discrete gradient method
\begin{equation} \label{dgmethod}
x_{n+1}-x_n=-\tau_n \overline{\nabla}V(x_n,x_{n+1})\,,
\end{equation}
where $\tau_n > 0$ is a time step that might vary from step to step. Due to Definition~\ref{defdisgrad} of a discrete gradient, this method preserves the dissipativity of the solution of the gradient system~\eqref{gradflow}, that is we have
\begin{eqnarray*}
V(x_{n+1})-V(x_n)&=&\langle \overline{\nabla}V(x_n,x_{n+1}),(x_{n+1}-x_n)\rangle =
-\tau_n \|\overline{\nabla}V(x_n,x_{n+1})\|^2 \leq 0
\end{eqnarray*}
for all steps $n$ and arbitrary $\tau_n > 0$ as a discrete analogue to the decay \eqref{decaycontinuous} of the continuous solution. 

For our numerical illustrations, we will mainly use the Gonzalez and Itoh-Abe discrete gradient. But we would like to stress that the properties just described as well as the theoretical results in the following sections hold for arbitrary discrete gradients - a rich family to pick from.

\section{Some properties of discrete gradient methods} \label{sec:PropDG}
The preservation of the dissipativity by a discrete gradient method leads to useful consequences. Before we can state our first result, we need to recall some definitions. 
\begin{definition} 
A functional $V\,:\,\R^n \rightarrow \R$ is called 
\begin{itemize}
\item {\em coercive} iff
\[
V(x_n) \rightarrow \infty \qquad \mbox{for} \qquad \|x_n\| \rightarrow \infty.
\]
\item {\em bounded from below} iff there exists a constant $C$ such that
\[
 C \leq V(x), \qquad \mbox{for all}\quad x \in \R^n\,.
\]
\item {\em convex} iff for all $x, y \in \R^n$ and $\lambda \in [0,1]$
\[
V(\lambda x +(1-\lambda)y) \leq \lambda V(x)+(1-\lambda)V(y)\,.
\]
\item {\em strictly convex} iff for all $x,y \in \R^n$, $x \not= y$, and $\lambda \in (0,1)$
\[
V(\lambda x +(1-\lambda)y) < \lambda V(x)+(1-\lambda)V(y)\,.
\]
\end{itemize}
\end{definition}
\begin{theorem} \label{seqtostat}
Let $\nabla V$ in \eqref{gradsys} stem from a functional $V$ which is bounded from below, coercive and continuously differentiable. If $\left\{ x_n \right\}_{n=0}^\infty$ is a sequence generated by the discrete gradient method \eqref{dgmethod} with time steps 
$0 < c \leq \tau_n \leq M < \infty$, then 
\[
  \lim_{n \rightarrow \infty} \overline{\nabla} V(x_{n+1},x_n)=
  \lim_{n \rightarrow \infty} \nabla V (x_n)=0\,.
\]
There exists at least one accumulation point of the sequence $\left\{ x_n \right\}_{n=0}^\infty$. And for any accumulation point $x_*$ of the sequence $\left\{ x_n \right\}_{n=0}^\infty$, we have $\nabla V(x_*)=0$.
\end{theorem}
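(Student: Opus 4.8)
The plan is to run the discrete dissipation identity already recorded in Section~\ref{sec:DGmethod}, namely
$V(x_{n+1})-V(x_n) = \langle\overline{\nabla}V(x_n,x_{n+1}),x_{n+1}-x_n\rangle = -\tau_n\|\overline{\nabla}V(x_n,x_{n+1})\|^2$, against the two standing hypotheses (bounded below, coercive). First I would note that this identity makes $\{V(x_n)\}$ monotonically nonincreasing, and since $V$ is bounded from below it converges. Hence the telescoping increments $V(x_n)-V(x_{n+1}) = \tau_n\|\overline{\nabla}V(x_n,x_{n+1})\|^2$ tend to $0$; using $\tau_n\ge c>0$ this forces $\overline{\nabla}V(x_n,x_{n+1})\to 0$, and then the method relation $x_{n+1}-x_n = -\tau_n\overline{\nabla}V(x_n,x_{n+1})$ together with $\tau_n\le M$ gives $\|x_{n+1}-x_n\|\to 0$.

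Next, to obtain an accumulation point, I would use that $V(x_n)\le V(x_0)$ for all $n$, so the entire sequence lies in the sublevel set $K:=\{x\in\R^n : V(x)\le V(x_0)\}$. Since $V$ is continuous, $K$ is closed; since $V$ is coercive, $K$ is bounded; hence $K$ is compact, and Bolzano--Weierstrass yields at least one accumulation point. Keeping track of the inclusion $\{x_n\}\subseteq K$ is the key fact I will reuse below.

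Then I would upgrade "$\overline{\nabla}V(x_n,x_{n+1})\to 0$ with consecutive iterates close" to "$\nabla V(x_n)\to 0$". The discrete gradient $\overline{\nabla}V$ is continuous on $\R^n\times\R^n$, hence uniformly continuous on the compact set $K\times K$, which contains all pairs $(x_n,x_{n+1})$, $(x_n,x_n)$ and $(x_{n+1},x_n)$. Because $\|(x_n,x_{n+1})-(x_n,x_n)\| = \|x_{n+1}-x_n\|\to 0$, uniform continuity gives $\|\overline{\nabla}V(x_n,x_{n+1})-\overline{\nabla}V(x_n,x_n)\|\to 0$; recalling $\overline{\nabla}V(x_n,x_n)=\nabla V(x_n)$ from Definition~\ref{defdisgrad} and combining with $\overline{\nabla}V(x_n,x_{n+1})\to 0$, we get $\nabla V(x_n)\to 0$. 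The same comparison of $(x_{n+1},x_n)$ with $(x_n,x_n)$ gives $\overline{\nabla}V(x_{n+1},x_n)\to 0$. Finally, for any accumulation point $x_*$ along a subsequence $x_{n_k}\to x_*$, continuity of $\nabla V$ yields $\nabla V(x_*)=\lim_k\nabla V(x_{n_k})=0$.

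The step I expect to be the main obstacle is the third paragraph: one is tempted to deduce $\nabla V(x_n)\to 0$ from plain continuity of $\overline{\nabla}V$, but that is not enough because the base points $x_n$ themselves move; the correct route is to confine the iterates to the compact sublevel set $K$ and invoke \emph{uniform} continuity there. This is precisely the place where coercivity is indispensable, so I would take care to state the compactness of $K$ explicitly. (Alternatively, the same conclusion can be reached by a contradiction argument extracting a convergent subsequence from a putative bad subsequence, which again relies on boundedness of $\{x_n\}$.) Everything else — convergence of $\{V(x_n)\}$, the estimates on $\|x_{n+1}-x_n\|$, and the subsequential limit identifying critical points — is routine.
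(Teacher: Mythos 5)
Your proposal is correct and follows essentially the same route as the paper's proof: the discrete dissipation identity plus boundedness from below gives convergence of $V(x_n)$ and hence $\overline{\nabla}V\to 0$ and $\|x_{n+1}-x_n\|\to 0$, coercivity confines the iterates to a compact sublevel set, and uniform continuity of $\overline{\nabla}V$ on that set upgrades the conclusion to $\nabla V(x_n)\to 0$, with Bolzano--Weierstrass and continuity of $\nabla V$ finishing the argument. The only (harmless) differences are that you bypass the paper's summation of the telescoping series by noting directly that increments of a convergent monotone sequence tend to zero, and you treat the two argument orderings of $\overline{\nabla}V$ a bit more carefully than the paper does.
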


\begin{proof} Since $V$ is bounded from below, say by $C$, and due to the preservation of the dissipativity, we find
\[
   C \leq V(x_{n+1}) \leq V(x_n) \leq \cdots \leq V(x_0), \qquad n=1,2,3,\ldots
\]
and hence the limit
\[
  \lim_{n \rightarrow \infty} V(x_n) = V_*
\]
exists. From Definition~\ref{defdisgrad} and the definition of the discrete gradient method in \eqref{dgmethod}, we find
\begin{align*}
\tau_n\|\overline{\nabla}V(x_{n+1},x_n)\|^2 
&=
-\langle \overline{\nabla}V(x_{n+1},x_n), x_{n+1}-x_n \rangle 
= V(x_n)-V(x_{n+1}) \\
&=
\frac{1}{\tau_n} \langle -\tau_n\overline{\nabla}V(x_{n+1},x_n), x_{n+1}-x_n \rangle \\
&=
\frac{1}{\tau_n} \| x_{n+1}-x_n \|^2 \geq 0
\end{align*}
for all $n$. By summing these equations from $n$ to $m-1$, $ m > n$, we obtain
\[
\sum_{k=n}^{m-1} \tau_k \left\|\overline{\nabla}V(x_{k+1},x_k)\right\|^2
  =\sum_{k=n}^{m-1} \frac{1}{\tau_k} \left\|x_{k+1}-x_k \right\|^2
  =V(x_n)-V(x_m)  \leq V(x_0)-V_*\,
\]
and thus
\[
\sum_{k=0}^\infty \left\|\overline{\nabla}V(x_{k+1},x_k)\right\|^2 \leq \frac{V(x_0)-V_*}{c} < \infty, \qquad 
\sum_{k=0}^\infty \left\|x_{k+1}-x_k \right\|^2 \leq M \left( V(x_0) -V_* \right)< \infty
\]
and therefore
\[
  \lim_{n \rightarrow \infty} (x_{n+1}-x_n)=\lim_{n \rightarrow \infty} \overline{\nabla}V(x_{n+1},x_n)=0\,.
\]
The sets $V_t$ defined by
\[
  V_t := \left\{ x \in \R^n~|~ V(x) \leq t \right\} 
\]
are empty or compact. Hence the set $V_{V(x_0)}$ is bounded and closed. In particular, $\overline{\nabla}V$ is uniformly continuous on $V_{V(x_0)} \times V_{V(x_0)}$, where we have chosen the usual topology on the product space to coincide with the norm induced by the standard scalar product on $\R^{2n}$. Therefore, for any $\epsilon > 0$ there exists a $\delta$ such that for $\|(x_{n+1},x_n)-(x_n,x_n)\| = \|x_{n+1}-x_n\| \leq \delta$ we have
\[
   \|\overline{\nabla} V(x_{n+1},x_n)-\overline{\nabla} V(x_n,x_n)\|=\|\overline{\nabla} V(x_{n+1},x_n)-\nabla V(x_n)\| < \epsilon.
\]
Since $\lim_{n \rightarrow \infty} \|x_{n+1}-x_n\|=0$, we find, that for large enough $n$, we have
\[
  \| \nabla V(x_n)\| \leq \| \overline{\nabla} V(x_{n+1},x_n)-\nabla V(x_n)\| + \|\overline{\nabla} V(x_{n+1},x_n)\| \leq 2\epsilon\,.
\]
Hence, altogether, we conclude
\[
  \lim_{n \rightarrow \infty} \overline{\nabla} V(x_{n+1},x_n)=\lim_{n \rightarrow \infty} \nabla V (x_n)=0\,.
\]
Due to the boundedness of $V_{V(x_0)}$, the sequence $\left\{ x_n \right\}_{n=0}^\infty$ has at least one accumulation point $x_*$ by the Bolzano-Weierstrass theorem. For a subsequence $\left\{ x_{n_l} \right\}_{l=0}^\infty$ with $\lim_{l \rightarrow \infty} x_{n_l}=x_*$, we have
\[
  0=\lim_{l \rightarrow \infty} \nabla V(x_{n_l})=\nabla V(x_*),\,
\]
due to the continuity of $\nabla V$.
\end{proof}

Theorem~\ref{seqtostat} states that the sequence $\left\{ x_n \right\}_{n=0}^\infty$ generated by any discrete gradient method satisfies $\lim_{n \rightarrow \infty} \nabla V(x_n)=0$. This property is very important in the minimization of functionals. In image processing, the functionals to be minimized are often convex or even strictly convex. For such functionals any discrete gradient method tends to global minimizers. 
\begin{theorem}
Under the assumptions of Theorem~\ref{seqtostat}.
\begin{enumerate}
\item If $V$ is in addition convex, then a minimizer exists and any accumulation point of the sequence $\left\{ x_n \right\}_{n=0}^\infty$ is a minimizer.
\item If $V$ is in addition strictly convex, then 
\[
  \lim_{n \rightarrow \infty} x_n=x_*, \qquad V(x_*)=\min_{x} V(x)\,,
\]
that is, the sequence of the discrete gradient approximations converges to the unique minimizer.
\end{enumerate}
\end{theorem}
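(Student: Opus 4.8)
The plan is to combine Theorem~\ref{seqtostat} with the first-order characterization of global minimizers of a convex differentiable functional. Recall that if $V$ is convex and continuously differentiable, then for all $x,y\in\R^n$ one has $V(y)\ge V(x)+\langle\nabla V(x),y-x\rangle$; consequently $\nabla V(x)=0$ if and only if $x$ is a global minimizer of $V$. This is essentially the only external fact I will need.

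For the first assertion, I would argue as follows. By Theorem~\ref{seqtostat} the sequence $\{x_n\}_{n=0}^\infty$ has at least one accumulation point, and every accumulation point $x_*$ satisfies $\nabla V(x_*)=0$. By the characterization above, each such $x_*$ is a global minimizer of $V$; in particular a minimizer exists (take any accumulation point), and any accumulation point of the sequence is a minimizer, which is exactly claim~(1).

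For the second assertion, strict convexity first yields uniqueness of the minimizer: if $x_1\ne x_2$ both minimized $V$ with common value $m$, then $V(\tfrac12 x_1+\tfrac12 x_2)<\tfrac12 m+\tfrac12 m=m$, a contradiction. Hence, by claim~(1), the sequence $\{x_n\}$ has exactly one accumulation point, namely this unique minimizer $x_*$. Since $V(x_n)\le V(x_0)$ for all $n$ by preservation of dissipativity, the whole sequence lies in the compact sublevel set $V_{V(x_0)}$ appearing in the proof of Theorem~\ref{seqtostat} and is therefore bounded. It remains to invoke the elementary fact that a bounded sequence in $\R^n$ with a single accumulation point converges to it: if $x_n\not\to x_*$, there would be an $\epsilon>0$ and a subsequence with $\|x_{n_l}-x_*\|\ge\epsilon$ for all $l$; being bounded, this subsequence would, by Bolzano--Weierstrass, have a convergent sub-subsequence whose limit differs from $x_*$, contradicting uniqueness of the accumulation point. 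Thus $\lim_{n\to\infty}x_n=x_*$ and $V(x_*)=\min_x V(x)$, which is claim~(2).

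The argument presents no real obstacle beyond Theorem~\ref{seqtostat} itself: the two ingredients --- the convex first-order optimality condition and the ``bounded sequence with a unique accumulation point converges'' lemma --- are both standard, and strict convexity enters only to pass from ``all accumulation points are minimizers'' to ``there is a unique accumulation point.'' If one wished to avoid even citing the first-order condition, one could instead note directly that for convex $V$ a critical point $x_*$ is a minimizer because $t\mapsto V(x_*+t(y-x_*))$ is convex with vanishing derivative at $t=0$, hence nondecreasing, so that $V(y)\ge V(x_*)$ for every $y$.
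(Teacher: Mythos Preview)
Your proof is correct and follows essentially the same route as the paper's own argument: both parts rest on the first-order convexity inequality $V(y)\ge V(x)+\langle\nabla V(x),y-x\rangle$ together with Theorem~\ref{seqtostat}, and uniqueness in part~(2) is obtained by the same strict-convexity contradiction. Your treatment of part~(2) is in fact a bit more careful than the paper's, which passes directly from ``all accumulation points coincide'' to ``the sequence converges'' without explicitly invoking boundedness and the bounded-sequence-with-one-accumulation-point lemma that you spell out.
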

\begin{proof}
It is a standard result, that a continuously differentiable function $V\,:\,\R^n \rightarrow \R$ is 
convex, if and only if
\[
  V(u)+\langle \nabla V(u), w-u \rangle \leq V(w), \qquad \mbox{for~all} \quad u,w \in \R^n\,.
\]
For an accumulation point $x_*$ of the sequence $\left\{ x_n \right\}_{n=0}^\infty$ generated by the discrete gradient method, we have $\nabla V(x_*)=0$ according to Theorem~\ref{seqtostat} and therefore $V(x_*) \leq V(w)$, for all $w \in \R^n$ which means that $x_*$ is a minimizer of the functional $V$. There is at least one accumulation point of the sequence $\left\{ x_n \right\}_{n=0}^\infty$ according to Theorem~\ref{seqtostat} and therefore a minimizer exists.

Assume that the function $V$ is strictly convex and that $x_*$ and $y_*$ were two different minimizers of $V$, that is  $x_* \not= y_*$ and $V(x_*)=V(y_*) \leq V(w)$ for all $w \in \R^n$. Since $V$ is strictly convex, pick $\lambda \in (0,1)$ and we obtain
\[
  V(\lambda x_* + (1-\lambda) y_*) < \lambda V(x_*) + (1-\lambda)V(y_*)=\lambda V(x_*) + (1-\lambda)V(x_*)=V(x_*)\,.
\]
Since $\lambda x_* + (1-\lambda) y_* \in \R^n$, this is a contradiction to $x_*$ (or $y_*$, respectively) being a minimizer. Hence the minimizer must be unique and therefore all accumulation points of the sequence $\left\{ x_n \right\}_{n=0}^\infty$, which are minimizers, must be identical. Therefore, the sequence converges to the unique minimizer. 
\end{proof}

\section{Nonlinear Examples} \label{sec:nonlinexmpl}

In this section, we illustrate the positive effect of the preservation of dissipativity by a series of numerical experiments on standard models in image processing, that involve a gradient flow.
In Subsection~\ref{sec:TVdenoising}, we study the TV denoising (also TV cartooning) functional, whose discretized version is strictly convex. The theory of Section~\ref{sec:PropDG} is applicable and we illustrate numerically with the Gonzalez discrete gradient that the discrete gradient method shows the predicted behavior. As a simple method that does not possess the preservation of decay property, the explicit Euler method is used for comparison. 
The introduction of a blurring kernel in the functional in Subsection~\ref{sec:TVdenoising}, which leads to a deblurring example, shows the same good effects of the preservation of the decay of the functional. After these basic examples, we provide three more experiments that generalize the application of discrete gradient methods in different ways. In Subsection~\ref{sec:TVinpainting} we apply the discrete gradient method to solve TV image inpainting. We discuss its performance using the Itoh \& Abe gradient with a simple adaptive step size rule and compare it with the so-called lagged diffusivity method \cite{AcVog94}, which for convex functionals $V$ shares the dissipation property of the discrete gradient approach \cite{chan1999convergence}. A question that is always important to answer is whether newly proposed methods are useful in actual applications. We therefore study a real-world color denoising example in Subsection~\ref{sec:TVmultichannel}.
Finally, in Subsection~\ref{sec:TVnonconvex} we study non-convex TV denoising \cite{hintermüller2013nonconvex} computed with the Itoh \& Abe discrete gradient method. We include this example to demonstrate the flexibility of the discrete gradient, and its robust structure-preserving properties, which guarantee monotonic decay of $V$ even in the non-convex case.

In what follows, we denote by $T_\alpha$ the continuous functional and by $V_\alpha$ its discretization for which the discrete gradient is computed. The parameter $\alpha$ indicates the dependence of the functional on a positive parameter $\alpha$ that weights the TV regularization against a fitting term to the given image data.


\subsection{Grayscale image denoising with TV regularization}\label{sec:TVdenoising}
For the denoising of a grayscale image, the following gradient descent method has been proposed in \cite{RuOshFa92}. The gradient system is based on the  functional
  \begin{equation}
    T_\alpha(u)= \frac{1}{2} \int_\Omega (u(x,y)-u_0(x,y))^2\,d(x,y) + \alpha TV(u),
  \end{equation}
  where we use the smoothed TV functional  
  \begin{equation} \label{TVfunctgraycont}
    TV(u)=\int_\Omega \sqrt{\left( \frac{\partial u}{\partial x}\right)^2
    + \left( \frac{\partial u}{\partial y}\right)^2 + \beta} \,\, d(x,y),
  \end{equation}
  with parameter $0<\beta \ll 1$ as suggested in \cite{AcVog94}. Under the smoothness assumption $u \in W^{1,1}$, this leads to the gradient system
  \[
  u_t = \alpha \nabla \cdot \left[ \frac{\nabla u}{|\nabla u|_\beta} \right]
    -(u-u_0) \qquad 
    \left. \frac{\partial u}{\partial {\bf n}} \right|_{\partial \Omega} = 0
  \]
  where $|\nabla u|_\beta = \sqrt{|\nabla u|^2+\beta}$.  
  For the computation, discretization of the continuous functional is necessary.   As in \cite{RuOshFa92}, we use finite differences, where the homogeneous Neumann boundary conditions are discretized by duplicating the boundary rows and columns of the original picture array. The discretized functional reads
\begin{equation} \label{functdenograycont}
    V_\alpha(u)=\frac{1}{2} \Delta x \Delta y \sum_{i=1}^{N_x} \sum_{j=1}^{N_y} \left( u_{i,j} - (u_0)_{i,j} \right)^2+ \alpha J(u),
\end{equation}
  with  
\begin{equation} \label{TVfunctgray}
     J(u)=\Delta x \Delta y \sum_{i=1}^{N_x} \sum_{j=1}^{N_y} \psi \left( (D^x_{ij}u)^2 
     + (D^y_{ij}u)^2 \right),
\end{equation}
  where $\psi(t)=\sqrt{t+\beta}$ and
  \[
    \qquad D_{ij}^xu = \frac{u_{i,j}-u_{i-1,j}}{\Delta x}, 
    \qquad \qquad D_{ij}^yu = \frac{u_{i,j}-u_{i,j-1}}{\Delta y}.
  \]
Here $u \in \R^{N_x} \times \R^{N_y}$ is the discretized picture. As usual, we identify the matrix 
$u\in \R^{N_x} \times \R^{N_y}$ with the vector $u\in \R^{N_xN_y}$ by running successively through the columns of $u \in \R^{N_x} \times \R^{N_y}$. The gradient system in $\R^{N_xN_y}$ then follows analogously to the continuous system by computing the gradient:
\begin{equation*} 
  \dot{u} = -\nabla V_\alpha(u), \qquad u(0)=u_0\,.
\end{equation*}
This gradient system satisfies the assumption of our theorems in Section~\ref{sec:PropDG}.
\begin{lemma} \label{Denoisegraylem}
The discretized functional $V_\alpha$ in \eqref{functdenograycont} is bounded from below, coercive, continuously differentiable and strictly convex. 
\end{lemma}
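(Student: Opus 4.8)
The plan is to split $V_\alpha = F + \alpha J$, where $F(u) = \tfrac12\Delta x\,\Delta y\sum_{i,j}(u_{i,j}-(u_0)_{i,j})^2 = \tfrac{\Delta x\,\Delta y}{2}\|u-u_0\|^2$ is the data-fidelity term and $J$ is the discretized regularizer \eqref{TVfunctgray}, and to check the four required properties for each summand separately. For $F$ everything is immediate: it is a positive multiple of a squared Euclidean distance, hence $C^\infty$, strictly convex (its Hessian is $\Delta x\,\Delta y\,I$, positive definite), nonnegative and therefore bounded from below, and coercive since $F(u)\ge \tfrac{\Delta x\,\Delta y}{2}(\|u\|-\|u_0\|)^2\to\infty$ as $\|u\|\to\infty$ by the reverse triangle inequality.

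For $J$, the first point is that the singularity of $t\mapsto\sqrt t$ at the origin is harmless: since $\beta>0$, the function $\psi(t)=\sqrt{t+\beta}$ is $C^\infty$ on $[0,\infty)$, and the arguments $(D^x_{ij}u)^2+(D^y_{ij}u)^2$ are quadratic forms in $u$ because $D^x_{ij}$ and $D^y_{ij}$ are linear functionals on $\R^{N_xN_y}$. Hence each summand, and thus $J$, is $C^\infty$. For the lower bound, each summand satisfies $\psi\big((D^x_{ij}u)^2+(D^y_{ij}u)^2\big)\ge\sqrt\beta$, so $J(u)\ge \Delta x\,\Delta y\,N_xN_y\sqrt\beta>0$; in particular $\alpha J$ is bounded from below.

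The one step that needs a real argument is convexity of $J$. I would show that $\phi(p,q):=\sqrt{p^2+q^2+\beta}$ is convex on $\R^2$ — either by verifying directly that its Hessian is positive semidefinite, or, more cleanly, by observing that $\phi$ is the restriction of the Euclidean norm $(p,q,r)\mapsto\sqrt{p^2+q^2+r^2}$ on $\R^3$ to the affine hyperplane $\{r=\sqrt\beta\}$, and the restriction of a convex function to an affine subspace is convex. Then $u\mapsto\phi(D^x_{ij}u,D^y_{ij}u)$ is the composition of the convex $\phi$ with a linear map, hence convex, and $J$, being a nonnegative combination of such terms, is convex, so $\alpha J$ is convex. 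I expect this convexity verification to be the only nontrivial ingredient; everything else is bookkeeping.

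Finally I would assemble the pieces. Continuous differentiability of $V_\alpha$ follows since it is a finite sum of $C^\infty$ functions. It is bounded from below because $V_\alpha(u)\ge 0+\alpha\Delta x\,\Delta y\,N_xN_y\sqrt\beta$, and coercive because $V_\alpha(u)\ge F(u)+\alpha\Delta x\,\Delta y\,N_xN_y\sqrt\beta\to\infty$ as $\|u\|\to\infty$. It is strictly convex because it is the sum of the strictly convex $F$ and the convex $\alpha J$: for $x\ne y$ and $\lambda\in(0,1)$ the defining inequality is already strict for the $F$-part and holds (non-strictly) for the $\alpha J$-part, and adding the two preserves strictness. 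Hence $V_\alpha$ satisfies all the hypotheses of the theorems in Section~\ref{sec:PropDG}.
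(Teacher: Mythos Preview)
Your proof is correct and follows essentially the same route as the paper: split $V_\alpha=F+\alpha J$, show $F$ is strictly convex and coercive, show $J$ is convex by recognizing each summand as the Euclidean norm of $(D^x_{ij}u,D^y_{ij}u,\sqrt\beta)$, and combine. Your presentation is in fact slightly more complete than the paper's, which does not explicitly address the lower bound or the differentiability; and your phrasing of the convexity step (restriction of the norm to the affine hyperplane $\{r=\sqrt\beta\}$) is a cleaner way of saying exactly what the paper does by writing $\sqrt\beta=\lambda\sqrt\beta+(1-\lambda)\sqrt\beta$ inside the square root and applying the triangle inequality.
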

\begin{proof} Due to
{\small
\begin{align*}
   J(\lambda u &+ (1-\lambda)v) \\
   & = \Delta x \Delta y \sum_{i=1}^{N_x} \sum_{j=1}^{N_y} \sqrt{ 
   \left( D_{ij}^x (\lambda u +(1-\lambda)v) \right)^2   
   +
   \left( D_{ij}^y (\lambda u +(1-\lambda)v) \right)^2   
   +
   \left( \lambda \sqrt{\beta} + (1-\lambda) \sqrt{\beta} \right)^2
   } \\
& = \Delta x \Delta y \sum_{i=1}^{N_x} \sum_{j=1}^{N_y}
   \sqrt{ 
   \left( \lambda D_{ij}^x u +(1-\lambda) D_{ij}^x v  \right)^2   
   +
   \left( \lambda D_{ij}^y u +(1-\lambda) D_{ij}^y v \right)^2   
   +
   \left( \lambda \sqrt{\beta} + (1-\lambda) \sqrt{\beta} \right)^2
   } \\   
& \leq \Delta x \Delta y
  \sum_{i=1}^{N_x} \sum_{j=1}^{N_y} \sqrt{ 
   \left( \lambda D_{ij}^x u \right)^2   
   +
   \left( \lambda D_{ij}^y u \right)^2   
   +
   \left( \lambda \sqrt{\beta} \right)^2
   } \\
 & \qquad  \qquad \qquad  + \Delta x \Delta y \sum_{i=1}^{N_x} \sum_{j=1}^{N_y}
  \sqrt{ 
   \left( (1-\lambda) D_{ij}^x v \right)^2   
   +
   \left( (1-\lambda) D_{ij}^y v \right)^2   
   +
   \left( (1-\lambda) \sqrt{\beta} \right)^2
   }  \\
&= \lambda J(u) + (1-\lambda) J(v)           
\end{align*}
}
for $\lambda \in [0,1]$ and $u,v$ two pictures, we find, that $J$ is convex. With two different constant pictures, it is easy to see that $J$ is not strictly convex. By checking that for $u \not= v$ and $t \in [0,1]$, we have
\[
  F(t)=\frac{1}{2} \Delta x \Delta y \|tu+(1-t)v-u_0\|^2, \qquad F''(t)=\Delta x \Delta y \|u-v\|^2>0
\]
and hence the functional $\frac{1}{2} \Delta x \Delta y \|u-u_0\|^2$ is strictly convex. Therefore, $V_\alpha$ as a whole is strictly convex. From this first functional, coercivity is obvious.
\end{proof}

Form Lemma~\ref{Denoisegraylem}, we immediately conclude the following corollary of our theorems.
\begin{corollary} The function $V_\alpha$ has a unique minimizer and the sequence generated by any discrete gradient method with step sizes $0 < c \leq \tau_n \leq M < \infty$ converges to the minimizer.
\end{corollary}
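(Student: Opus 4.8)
The plan is to observe that this corollary is an immediate consequence of Lemma~\ref{Denoisegraylem} together with the two theorems of Section~\ref{sec:PropDG}, so the ``proof'' is essentially a matter of checking that the hypotheses line up. First I would invoke Lemma~\ref{Denoisegraylem}, which asserts precisely that $V_\alpha$ in \eqref{functdenograycont} is bounded from below, coercive, continuously differentiable, and strictly convex. In particular $V_\alpha$ is convex, so it meets every standing assumption of Theorem~\ref{seqtostat} as well as the extra hypotheses of the subsequent theorem on convex and strictly convex functionals.

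Next I would note that the step-size requirement $0 < c \le \tau_n \le M < \infty$ in the statement is exactly the condition imposed in Theorem~\ref{seqtostat}, so any sequence $\{x_n\}_{n=0}^\infty$ produced by the discrete gradient method \eqref{dgmethod} for the gradient system $\dot u = -\nabla V_\alpha(u)$ falls under the scope of that theorem: it has at least one accumulation point, and every accumulation point is a critical point of $V_\alpha$. Applying part~1 of the convexity theorem then yields that a minimizer exists and that every accumulation point of $\{x_n\}$ is a minimizer; applying part~2, which uses strict convexity, upgrades this to uniqueness of the minimizer $x_*$ and to the statement $\lim_{n\to\infty} x_n = x_*$ with $V_\alpha(x_*) = \min_x V_\alpha(x)$. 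That is exactly the claim of the corollary.

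There is no real obstacle here: the only thing to be careful about is that all three previously established ingredients — coercivity/boundedness/continuous differentiability (needed for Theorem~\ref{seqtostat}), plain convexity (needed for existence of a minimizer), and strict convexity (needed for uniqueness and convergence of the whole sequence) — are simultaneously supplied by Lemma~\ref{Denoisegraylem}. Since they are, the corollary follows without any further argument, and I would simply write ``This is immediate from Lemma~\ref{Denoisegraylem} and the theorems of Section~\ref{sec:PropDG}.''
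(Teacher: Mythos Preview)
Your proposal is correct and matches the paper's treatment exactly: the paper does not even write out a proof but simply states that the corollary follows immediately from Lemma~\ref{Denoisegraylem} and the theorems of Section~\ref{sec:PropDG}. Your hypothesis-checking is precisely the content implicit in that one-line justification.
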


\begin{figure}
\begin{center}
\begin{tikzpicture}
\begin{semilogxaxis}[width=15cm,height=7cm,xmin=0.75,xmax=11000,ymin=300,ymax=500, xtick={1,10,100, 1000, 10000}, ytick={350,400,450}, xticklabels={$1$, $10$, $100$, $1000$, $10000$},  yticklabels={$350$, $400$, 
$450$}, title={}, xlabel=\empty, ylabel=\empty]
\addplot[thick,color=cyan, dotted] coordinates {
( 1, 467.8589 )
( 2, 374.2165 )
( 3, 414.8694 )
( 4, 419.3028 )
( 6, 446.0137 )
( 8, 458.8466 )
( 10, 464.3845 )
( 20, 476.5267 )
( 30, 480.9473 )
( 40, 483.4359 )
( 50, 484.9022 )
( 60, 486.1986 )
( 70, 487.026 )
( 80, 487.8055 )
( 90, 488.3066 )
( 100, 488.7647 )
( 200, 490.4434 )
( 300, 490.8837 )
( 400, 491.2657 )
( 500, 491.4068 )
( 600, 491.4569 )
( 700, 491.4747 )
( 800, 491.4881 )
( 900, 491.4884 )
( 1000, 491.5005 )
( 2000, 491.5006 )
( 3000, 491.5006 )
( 4000, 491.5006 )
( 5000, 491.5006 )
( 6000, 491.5006 )
( 7000, 491.5006 )
( 8000, 491.5006 )
( 9000, 491.5006 )
( 10000, 491.5006 )
};
\addplot[thick,color=red, dashed] coordinates {
( 1, 467.8589 )
( 2, 365.5793 )
( 3, 364.4971 )
( 4, 368.401 )
( 6, 385.2175 )
( 8, 393.4588 )
( 10, 397.5097 )
( 20, 405.0403 )
( 30, 407.7537 )
( 40, 409.3211 )
( 50, 410.3721 )
( 60, 411.0359 )
( 70, 411.5584 )
( 80, 411.9469 )
( 90, 412.1417 )
( 100, 412.3218 )
( 200, 413.3989 )
( 300, 413.687 )
( 400, 413.788 )
( 500, 413.8501 )
( 600, 413.8829 )
( 700, 413.8832 )
( 800, 413.8845 )
( 900, 413.885 )
( 1000, 413.885 )
( 2000, 413.8875 )
( 3000, 413.8875 )
( 4000, 413.8875 )
( 5000, 413.8875 )
( 6000, 413.8875 )
( 7000, 413.8875 )
( 8000, 413.8875 )
( 9000, 413.8875 )
( 10000, 413.8875 )
};
\addplot[thick,color=red, dashed] coordinates {
( 1, 467.8589 )
( 2, 371.1018 )
( 3, 345.0953 )
( 4, 338.6015 )
( 6, 342.2888 )
( 8, 346.7796 )
( 10, 349.3206 )
( 20, 353.6875 )
( 30, 354.9136 )
( 40, 355.5669 )
( 50, 356.0545 )
( 60, 356.4045 )
( 70, 356.7088 )
( 80, 356.939 )
( 90, 357.1253 )
( 100, 357.2579 )
( 200, 357.8246 )
( 300, 357.9327 )
( 400, 357.9931 )
( 500, 358.0084 )
( 600, 358.036 )
( 700, 358.0372 )
( 800, 358.0372 )
( 900, 358.0372 )
( 1000, 358.0372 )
( 2000, 358.0371 )
( 3000, 358.0371 )
( 4000, 358.0371 )
( 5000, 358.0371 )
( 6000, 358.0371 )
( 7000, 358.0371 )
( 8000, 358.0371 )
( 9000, 358.0371 )
( 10000, 358.0371 )
};
\addplot[thick,color=red, dashed] coordinates {
( 1, 467.8589 )
( 2, 391.0072 )
( 3, 356.1993 )
( 4, 339.0017 )
( 5, 330.1571 )
( 6, 325.4548 )
( 7, 323.1111 )
( 8, 321.8375 )
( 9, 321.4435 )
( 10, 321.1641 )
( 20, 322.3839 )
( 30, 322.8598 )
( 40, 323.0612 )
( 50, 323.1602 )
( 60, 323.2291 )
( 70, 323.2769 )
( 80, 323.3061 )
( 90, 323.3213 )
( 100, 323.3362 )
( 200, 323.4645 )
( 300, 323.5303 )
( 400, 323.5485 )
( 500, 323.5525 )
( 600, 323.5554 )
( 700, 323.5576 )
( 800, 323.5576 )
( 900, 323.5576 )
( 1000, 323.5576 )
( 2000, 323.5596 )
( 3000, 323.5596 )
( 4000, 323.5596 )
( 5000, 323.5596 )
( 6000, 323.5596 )
( 7000, 323.5596 )
( 8000, 323.5596 )
( 9000, 323.5596 )
( 10000, 323.5596 )
};
\addplot[thick,color=green, dashdotted] coordinates {
( 1, 467.8589 )
( 2, 424.1207 )
( 3, 394.3943 )
( 4, 373.7773 )
( 5, 359.1891 )
( 6, 348.693 )
( 7, 341.0426 )
( 8, 335.4099 )
( 9, 331.2281 )
( 10, 328.1013 )
( 20, 318.8703 )
( 30, 318.1637 )
( 40, 318.0995 )
( 50, 318.0932 )
( 60, 318.0926 )
( 70, 318.0925 )
( 80, 318.0925 )
( 90, 318.0925 )
( 100, 318.0925 )
( 200, 318.0925 )
( 300, 318.0925 )
( 400, 318.0925 )
( 500, 318.0925 )
( 600, 318.0925 )
( 700, 318.0925 )
( 800, 318.0925 )
( 900, 318.0925 )
( 1000, 318.0925 )
( 2000, 318.0925 )
( 3000, 318.0925 )
( 4000, 318.0925 )
( 5000, 318.0925 )
( 6000, 318.0925 )
( 7000, 318.0925 )
( 8000, 318.0925 )
( 9000, 318.0925 )
( 10000, 318.0925 )
};
\addplot[thick,color=green, dashdotted] coordinates {
( 1, 467.8589 )
( 2, 463.0846 )
( 3, 458.4706 )
( 4, 454.0125 )
( 5, 449.7056 )
( 6, 445.5453 )
( 7, 441.5271 )
( 8, 437.6464 )
( 9, 433.8987 )
( 10, 430.2795 )
( 20, 400.2377 )
( 30, 378.8842 )
( 40, 363.5145 )
( 50, 352.3067 )
( 60, 344.0406 )
( 70, 337.8864 )
( 80, 333.268 )
( 90, 329.7786 )
( 100, 327.1265 )
( 200, 318.8885 )
( 300, 318.1738 )
( 400, 318.1014 )
( 500, 318.0935 )
( 600, 318.0926 )
( 700, 318.0925 )
( 800, 318.0925 )
( 900, 318.0925 )
( 1000, 318.0925 )
( 2000, 318.0925 )
( 3000, 318.0925 )
( 4000, 318.0925 )
( 5000, 318.0925 )
( 6000, 318.0925 )
( 7000, 318.0925 )
( 8000, 318.0925 )
( 9000, 318.0925 )
( 10000, 318.0925 )
};
\addplot[thick,color=green, dashdotted] coordinates {
( 1, 467.8589 )
( 2, 467.3778 )
( 3, 466.8983 )
( 4, 466.4204 )
( 5, 465.9442 )
( 6, 465.4695 )
( 7, 464.9964 )
( 8, 464.5249 )
( 9, 464.0551 )
( 10, 463.5868 )
( 20, 458.9899 )
( 30, 454.5467 )
( 40, 450.2526 )
( 50, 446.1031 )
( 60, 442.0938 )
( 70, 438.2204 )
( 80, 434.4783 )
( 90, 430.8633 )
( 100, 427.3711 )
( 200, 398.3285 )
( 300, 377.6124 )
( 400, 362.6559 )
( 500, 351.7214 )
( 600, 343.6393 )
( 700, 337.6105 )
( 800, 333.0785 )
( 900, 329.6488 )
( 1000, 327.0381 )
( 2000, 318.89 )
( 3000, 318.1748 )
( 4000, 318.1016 )
( 5000, 318.0936 )
( 6000, 318.0926 )
( 7000, 318.0925 )
( 8000, 318.0925 )
( 9000, 318.0925 )
( 10000, 318.0925 )
};
\addplot[thick,color=blue] coordinates {
( 1, 467.8589 )
( 2, 359.8127 )
( 3, 339.7588 )
( 4, 329.8056 )
( 5, 325.1568 )
( 6, 322.3599 )
( 7, 320.7955 )
( 8, 319.823 )
( 9, 319.2155 )
( 10, 318.8502 )
( 20, 318.1145 )
( 30, 318.0938 )
( 40, 318.0929 )
( 50, 318.0929 )
( 60, 318.0929 )
( 70, 318.0929 )
( 80, 318.0929 )
( 90, 318.0929 )
( 100, 318.0929 )
( 10000, 318.0929 )
};
\end{semilogxaxis}
\end{tikzpicture} 
\end{center}
\vspace{-1cm}
\caption{TV functional Baboon, explicit Euler method with step size $\tau=0.5$ (cyan dotted line), explicit Euler method with $\tau=0.4, 0.3, 0.2$ (red dashed lines, top to bottom), explicit Euler with $\tau=0.1, 0.01, 0.001$ (green dash-dotted lines, left to right) discrete gradient method (blue) with step size $\tau = 2.5$  (in all experiments: $\alpha=0.05$, $\beta=0.001$)}
\label{baboondenoisepic}
\end{figure}
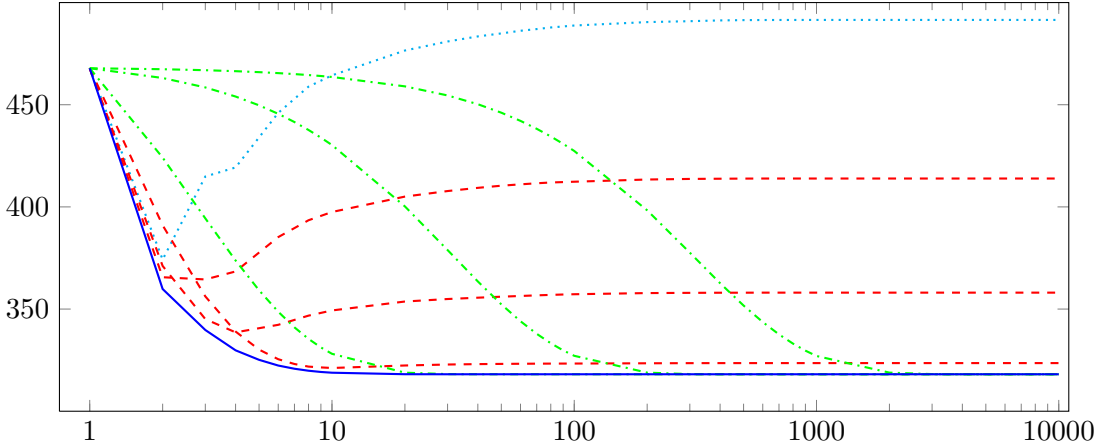

\begin{figure}  
  \begin{center}
  \includegraphics[width=6cm]{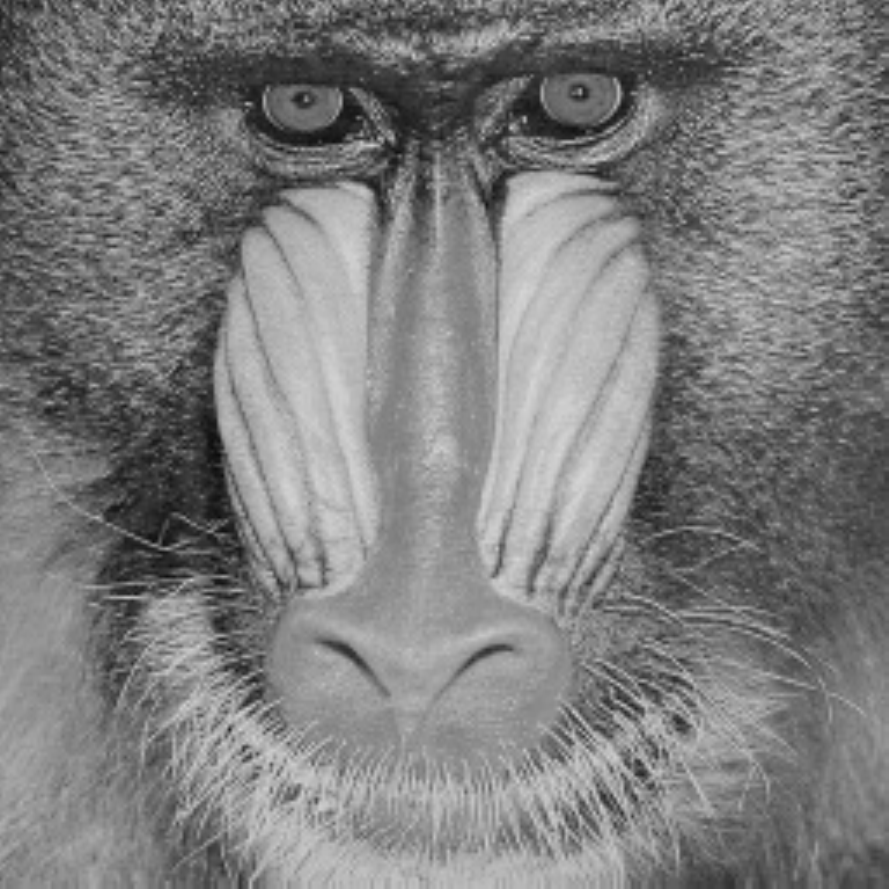}
  \hspace{2cm}
  \includegraphics[width=6cm]{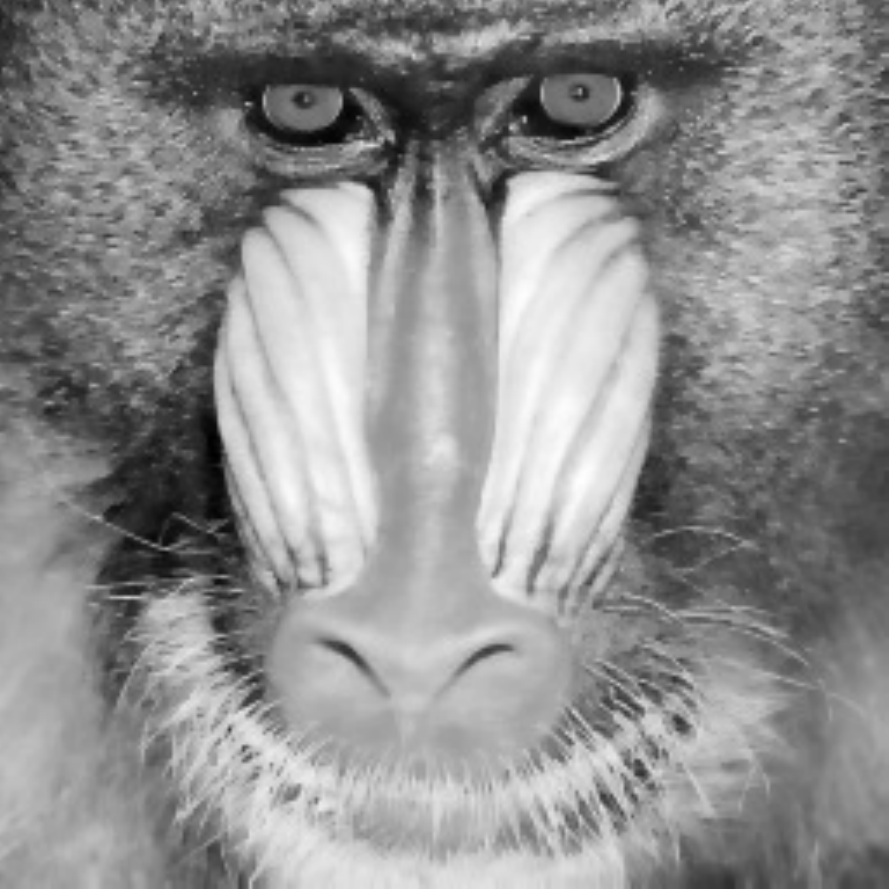}
  \end{center}
  
  \vspace{1cm}
  
  \begin{center}
  \includegraphics[width=6cm]{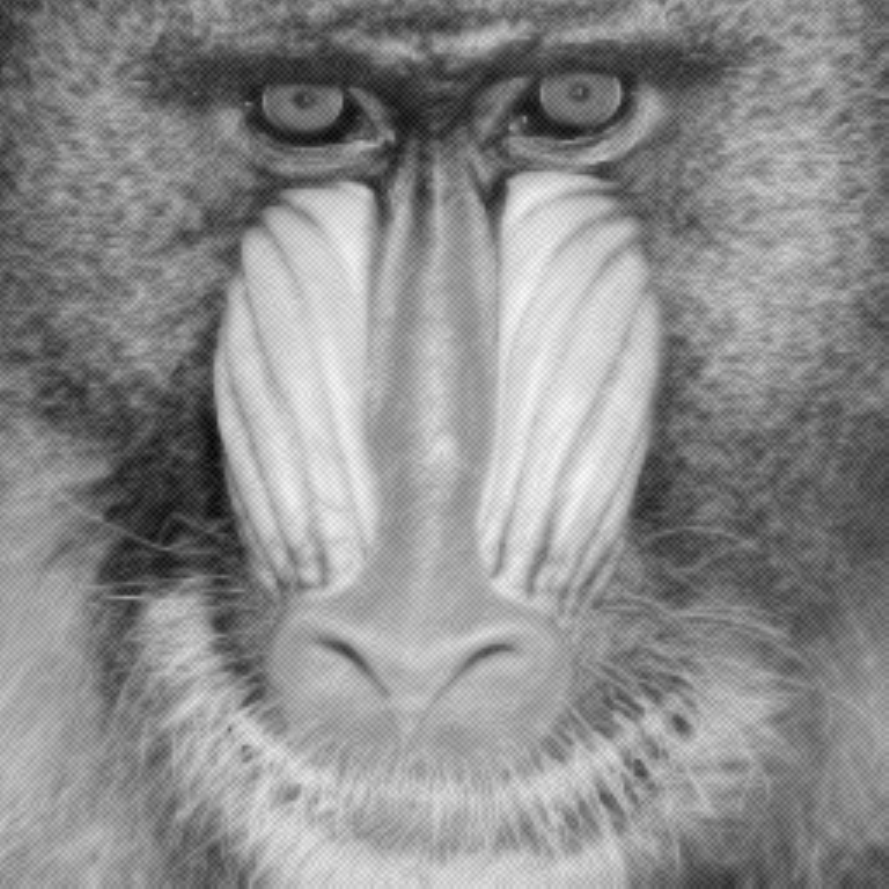}
  \hspace{2cm}
  \includegraphics[width=6cm]{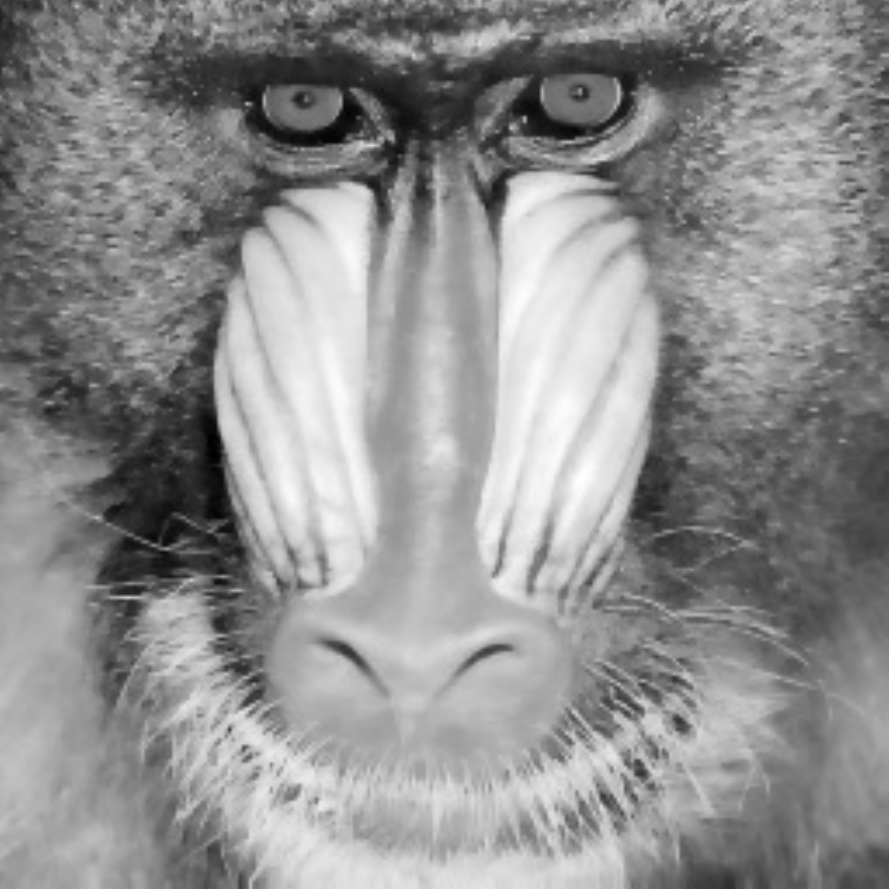}
  \end{center}
  \caption{Original picture (top left) and TV-denoised reference picture with the explicit Euler method, step-size $\tau=0.001$ after $10000$ steps (top right), 
  explicit Euler denoised image with step-size $\tau=0.3$ after $10000$ steps (bottom left) and discrete gradient denoised baboon with step-size $\tau=2.5$ after 10 steps (bottom right).}  \label{expldgdenoisedbaboon}
\end{figure}

In our examples below, we adopted another simplification which is common in image processing. The domain of the picture is scaled into a rectangular region such that $\Delta x=1$ and $\Delta y=1$. Then image data is scaled to $[0,1]$. In all experiments we chose $\alpha=0.05$ and $\beta=0.001$. A first experiment with the Gonzalez discrete gradient shows that the discrete gradient method converges reliably to the minimizer of the total variation functional. In Figure~\ref{baboondenoisepic} the value of the functional is plotted against the number of steps.
For the step size $\tau=2.5$, the discrete gradient method converges in about $10$ steps to the same minimum value of the functional as the explicit Euler method in $40$ steps with step size $\tau=0.1$ or in $200$ steps with step size $\tau=0.01$ or in $2000$ steps with step size $\tau=0.001$. The equilibrium computed by the Euler method with step size $\tau=0.001$ after $10000$ steps is used as reference equilibrium picture. The explicit Euler method with step sizes $\tau=0.2$ and larger stagnates at a larger value of the total variation functional and never converges to the correct value (cf. Figure~\ref{baboondenoisepic}). For these step sizes, the value of the functional oscillates, which can not be seen in the figure due to aliasing. The discrete gradient method cannot oscillate, since it is strictly decreasing.

In our experiment, we used the mandrill a. k. a. baboon picture of the  USC-SIPI Image Database, \cite{USC-SIPIbase}, turned into a grayscale image. At a close inspection, the difference of the final denoised pictures shown in Figure~\ref{expldgdenoisedbaboon} can be seen. The denoised image computed with the Euler method with step size $\tau=0.3$ (Figure~\ref{expldgdenoisedbaboon}, bottom left-hand side) shows slightly more details than the reference picture (Figure~\ref{expldgdenoisedbaboon}, top right-hand side), while the denoised image computed with the discrete gradient method at step size $\tau=2.5$ (Figure~\ref{expldgdenoisedbaboon}, bottom right-hand side) fits very well to the reference picture. 


\subsection{Grayscale image deblurring with TV regularization} \label{sec:TVdeblurring}
Following \cite{AcVog94,ChamLio97}, we minimize the functional
  \[
    T_\alpha(u)= \frac{1}{2} \int_\Omega ((Ku)(x)-u_0(x))^2\,dx + \alpha TV(u),
  \]
  where $TV(u)$ is defined as in \eqref{TVfunctgraycont}.
   Under the smoothness assumption $u \in W^{1,1}$, this leads to the parabolic gradient system
  \[
  u_t = \alpha \nabla \cdot \left[ \frac{\nabla u}{|\nabla u|_\beta} \right]
    -K^*(Ku-u_0),\qquad \mbox{with} \qquad 
    \left. \frac{\partial u}{\partial {\bf n}} \right|_{\partial \Omega} = 0.
  \] 
  The same procedure gives for the discretized functional
  \begin{equation} \label{deblVdisc}
    V_\alpha(u)=\frac{1}{2} \Delta x \Delta y \sum_{i=1}^{Nx} \sum_{j=1}^{Ny} \left(K u_{i,j} - (u_0)_{i,j},\right)^2+ \alpha J(u),
  \end{equation}
  where $J$ is given as before in \eqref{TVfunctgray}. We blur the image by convolution with the symmetric kernel $K$, given below as point spread function (PSF), which corresponds to the resulting image of a single bright pixel under the blurring transformation: 
\begin{equation} \label{PSF}
    K=
    \frac{1}{49}
    \left[
       \begin{array}{ccccccc}
          1 & 1 & 1 & 1 & 1 & 1 & 1 \\
          1 & 1 & 1 & 1 & 1 & 1 & 1 \\
          1 & 1 & 1 & 1 & 1 & 1 & 1 \\
          1 & 1 & 1 & 1 & 1 & 1 & 1 \\
          1 & 1 & 1 & 1 & 1 & 1 & 1 \\
          1 & 1 & 1 & 1 & 1 & 1 & 1 \\
          1 & 1 & 1 & 1 & 1 & 1 & 1        
       \end{array}
    \right]\,.
\end{equation}
In order to ensure Neumann boundary conditions, the original image $u$ is embedded in an image with four times the size of the original image, by reflecting the original image over the right-hand side and top boundaries. Then a two-dimensional convolution with the PSF in \eqref{PSF} is computed via the Fast Fourier Transform (FFT) and the resulting image of the correct size extracted. Details can be found in the nice introduction \cite{HNLdeblur06} to deblurring images. 
The original and the blurred image can be seen at the top of Figure~\ref{functblurbigsteppic}. In all experiments we used $\alpha=0.05$ and $\beta=0.001$. In Figure~\ref{functblurbigstep}, we first compare the value of the functional for subsequent steps of the Euler method with step size $\tau=0.19$ (cyan dotted line) to the value of the functional for subsequent steps of the midpoint discrete gradient method \eqref{eq:DGgonzalez} with step size $\tau=2.5$ (blue solid line). While the explicit Euler method does not produce a useful result, the image that corresponds to the computed equilibrium by the discrete gradient method is shown in Figure~\ref{functblurbigsteppic}, the right-hand side picture in the second row. For the solution of the implicit equation in the discrete gradient method, we have used the Newton method in the inner iteration with the exact Jacobian. The resulting linear system is solved by the Conjugate Gradient (CG) method (cf. \cite{HeStiCG}). 
\begin{figure}
\begin{center}
\begin{tikzpicture}
\begin{semilogxaxis}[width=15cm,height=7cm,xmin=0.75,xmax=11000,ymin=500,ymax=550, xtick={1,10,100, 1000, 10000}, ytick={510,520,530,540}, xticklabels={$1$, $10$, $100$, $1000$, $10000$},  yticklabels={$510$, $520$,$530$,$540$}, title={}, xlabel=\empty, ylabel=\empty]
\addplot[thick,color=red, dashed] coordinates {
( 1, 534.4168 )
( 2, 520.1117 )
( 3, 515.0396 )
( 4, 512.5979 )
( 5, 511.2594 )
( 6, 510.4553 )
( 7, 509.9365 )
( 8, 509.5838 )
( 9, 509.3354 )
( 10, 509.1575 )
( 20, 509.9863 )
( 30, 520.0123 )
( 40, 527.0358 )
( 50, 529.2291 )
( 60, 529.9959 )
( 70, 530.3589 )
( 80, 530.5833 )
( 90, 530.7405 )
( 100, 530.8563 )
( 200, 531.4839 )
( 300, 531.6957 )
( 400, 531.8749 )
( 500, 532.0213 )
( 600, 532.0921 )
( 700, 532.1352 )
( 800, 532.181 )
( 900, 532.2379 )
( 1000, 532.264 )
( 2000, 532.3961 )
( 3000, 532.4537 )
( 4000, 532.4636 )
( 5000, 532.4725 )
( 6000, 532.4926 )
( 7000, 532.497 )
( 8000, 532.497 )
( 9000, 532.4971 )
( 10000, 532.4971 )
};
\addplot[thick,color=red, dashed] coordinates {
( 1, 534.4168 )
( 2, 520.3609 )
( 3, 515.2505 )
( 4, 512.7592 )
( 5, 511.3806 )
( 6, 510.547 )
( 7, 510.0064 )
( 8, 509.6363 )
( 9, 509.3728 )
( 10, 509.1799 )
( 20, 508.8218 )
( 30, 512.2619 )
( 40, 518.3166 )
( 50, 521.5199 )
( 60, 522.7868 )
( 70, 523.3454 )
( 80, 523.6493 )
( 90, 523.8401 )
( 100, 523.967 )
( 200, 524.5044 )
( 300, 524.672 )
( 400, 524.8069 )
( 500, 524.892 )
( 600, 524.9607 )
( 700, 524.987 )
( 800, 525.0214 )
( 900, 525.0471 )
( 1000, 525.0801 )
( 2000, 525.2049 )
( 3000, 525.2374 )
( 4000, 525.2706 )
( 5000, 525.2749 )
( 6000, 525.2761 )
( 7000, 525.2773 )
( 8000, 525.2781 )
( 9000, 525.2781 )
( 10000, 525.2781 )
};
\addplot[thick,color=red, dashed] coordinates {
( 1, 534.4168 )
( 2, 520.6182 )
( 3, 515.4711 )
( 4, 512.9304 )
( 5, 511.5109 )
( 6, 510.6475 )
( 7, 510.085 )
( 8, 509.6985 )
( 9, 509.4219 )
( 10, 509.2179 )
( 20, 508.5917 )
( 30, 508.9913 )
( 40, 511.233 )
( 50, 514.0682 )
( 60, 515.8566 )
( 70, 516.7978 )
( 80, 517.3106 )
( 90, 517.6156 )
( 100, 517.81 )
( 200, 518.3421 )
( 300, 518.4909 )
( 400, 518.579 )
( 500, 518.6561 )
( 600, 518.6807 )
( 700, 518.7051 )
( 800, 518.7279 )
( 900, 518.7555 )
( 1000, 518.7689 )
( 2000, 518.8408 )
( 3000, 518.8743 )
( 4000, 518.8781 )
( 5000, 518.8822 )
( 6000, 518.8849 )
( 7000, 518.909 )
( 8000, 518.9089 )
( 9000, 518.9108 )
( 10000, 518.9108 )
};
\addplot[thick,color=cyan, dotted] coordinates {
( 1, 534.4168 )
( 2, 519.8706 )
( 3, 514.8381 )
( 4, 512.4461 )
( 5, 511.1475 )
( 6, 510.3731 )
( 7, 509.8774 )
( 8, 509.5451 )
( 9, 509.318 )
( 10, 509.1662 )
( 20, 513.694 )
( 30, 530.0458 )
( 40, 535.615 )
( 50, 536.9858 )
( 60, 537.5079 )
( 70, 537.8126 )
( 80, 538.0195 )
( 90, 538.1796 )
( 100, 538.3063 )
( 200, 539.0465 )
( 300, 539.3467 )
( 400, 539.5461 )
( 500, 539.6892 )
( 600, 539.7961 )
( 700, 539.8672 )
( 800, 539.9257 )
( 900, 539.9768 )
( 1000, 540.0415 )
( 2000, 540.2137 )
( 3000, 540.2951 )
( 4000, 540.3384 )
( 5000, 540.3492 )
( 6000, 540.3498 )
( 7000, 540.3497 )
( 8000, 540.3497 )
( 9000, 540.3496 )
( 10000, 540.3496 )
};
\addplot[thick,color=green, dashdotted] coordinates {
( 1, 534.4168 )
( 2, 525.421 )
( 3, 520.2854 )
( 4, 517.0997 )
( 5, 514.9942 )
( 6, 513.5363 )
( 7, 512.4911 )
( 8, 511.7201 )
( 9, 511.1376 )
( 10, 510.6878 )
( 20, 508.9839 )
( 30, 508.6204 )
( 40, 508.5051 )
( 50, 508.4595 )
( 60, 508.4381 )
( 70, 508.4266 )
( 80, 508.4199 )
( 90, 508.4157 )
( 100, 508.4131 )
( 200, 508.4072 )
( 300, 508.4069 )
( 400, 508.4068 )
( 500, 508.4068 )
( 600, 508.4068 )
( 700, 508.4068 )
( 800, 508.4068 )
( 900, 508.4068 )
( 1000, 508.4068 )
( 10000, 508.4068 )
};
\addplot[thick,color=green, dashdotted] coordinates {
( 1, 534.4168 )
( 2, 533.3926 )
( 3, 532.4197 )
( 4, 531.4952 )
( 5, 530.6163 )
( 6, 529.7804 )
( 7, 528.9851 )
( 8, 528.2282 )
( 9, 527.5075 )
( 10, 526.8211 )
( 20, 521.4707 )
( 30, 518.0239 )
( 40, 515.7067 )
( 50, 514.0896 )
( 60, 512.9253 )
( 70, 512.0644 )
( 80, 511.4133 )
( 90, 510.9109 )
( 100, 510.5164 )
( 200, 508.9624 )
( 300, 508.6164 )
( 400, 508.5042 )
( 500, 508.4592 )
( 600, 508.4379 )
( 700, 508.4265 )
( 800, 508.4199 )
( 900, 508.4157 )
( 1000, 508.4131 )
( 2000, 508.4072 )
( 3000, 508.4069 )
( 4000, 508.4068 )
( 10000, 508.4068 )
};
\addplot[thick,color=green, dashdotted] coordinates {
( 1, 534.4168 )
( 2, 534.3132 )
( 3, 534.2101 )
( 4, 534.1075 )
( 5, 534.0055 )
( 6, 533.904 )
( 7, 533.803 )
( 8, 533.7025 )
( 9, 533.6025 )
( 10, 533.503 )
( 20, 532.5352 )
( 30, 531.6147 )
( 40, 530.7389 )
( 50, 529.9054 )
( 60, 529.1119 )
( 70, 528.3561 )
( 80, 527.6359 )
( 90, 526.9496 )
( 100, 526.2951 )
( 200, 521.1694 )
( 300, 517.8408 )
( 400, 515.5896 )
( 500, 514.0116 )
( 600, 512.8714 )
( 700, 512.0262 )
( 800, 511.3854 )
( 900, 510.8901 )
( 1000, 510.5005 )
( 2000, 508.9604 )
( 3000, 508.616 )
( 4000, 508.5041 )
( 5000, 508.4592 )
( 6000, 508.4379 )
( 7000, 508.4265 )
( 8000, 508.4199 )
( 9000, 508.4157 )
( 10000, 508.4131 )
};
\addplot[thick,color=blue] coordinates {
( 1, 534.4168 )
( 2, 514.2426 )
( 3, 510.5953 )
( 4, 509.3868 )
( 5, 508.8855 )
( 6, 508.6533 )
( 7, 508.5381 )
( 8, 508.4786 )
( 9, 508.4468 )
( 10, 508.4295 )
( 20, 508.407 )
( 30, 508.4068 )
( 1000, 508.4069 )
( 10000, 508.4069 )
};
\end{semilogxaxis}
\end{tikzpicture} 
\end{center}
\vspace{-1cm}
\caption{TV deblurring functional Baboon, explicit Euler method with step size $\tau=0.19$ (cyan dotted line), explicit Euler with $\tau=0.185, 0.18, 0.175$ (red dashed lines, top to bottom), explicit Euler with $\tau=0.1, 0.01, 0.001$ (green dash-dotted lines, left to right), discrete gradient method (blue solid line) with step size $\tau=2.5$ (in all experiments: $\alpha=0.05$, $\beta=0.001$).}
\label{functblurbigstep}
\end{figure}

\begin{figure}  
  \begin{center}
  \includegraphics[width=6cm]{baboon.pdf}
  \hspace{2cm}
  \includegraphics[width=6cm]{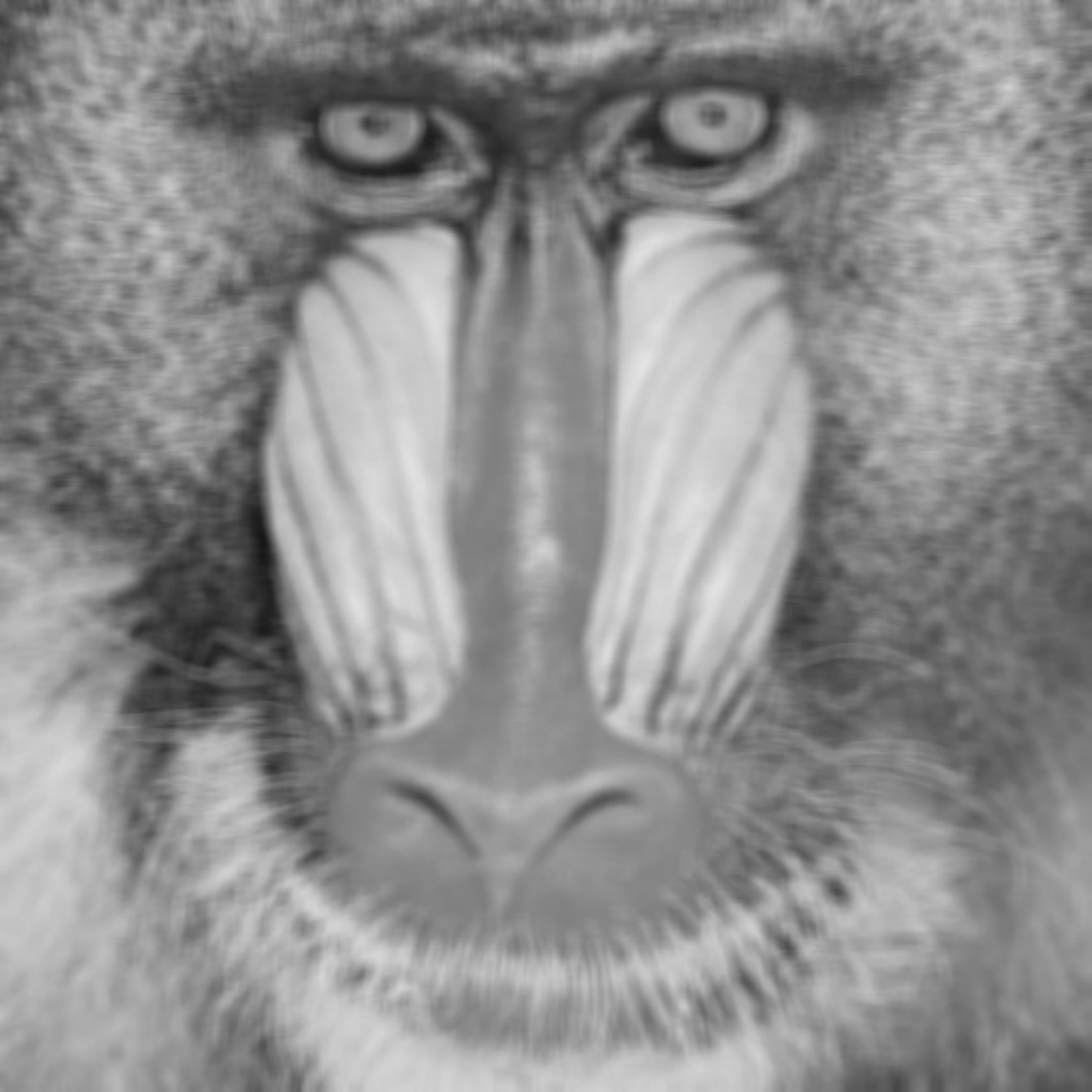}
  \end{center}
  
  \vspace{1cm}
  
  \begin{center}
  \includegraphics[width=6cm]{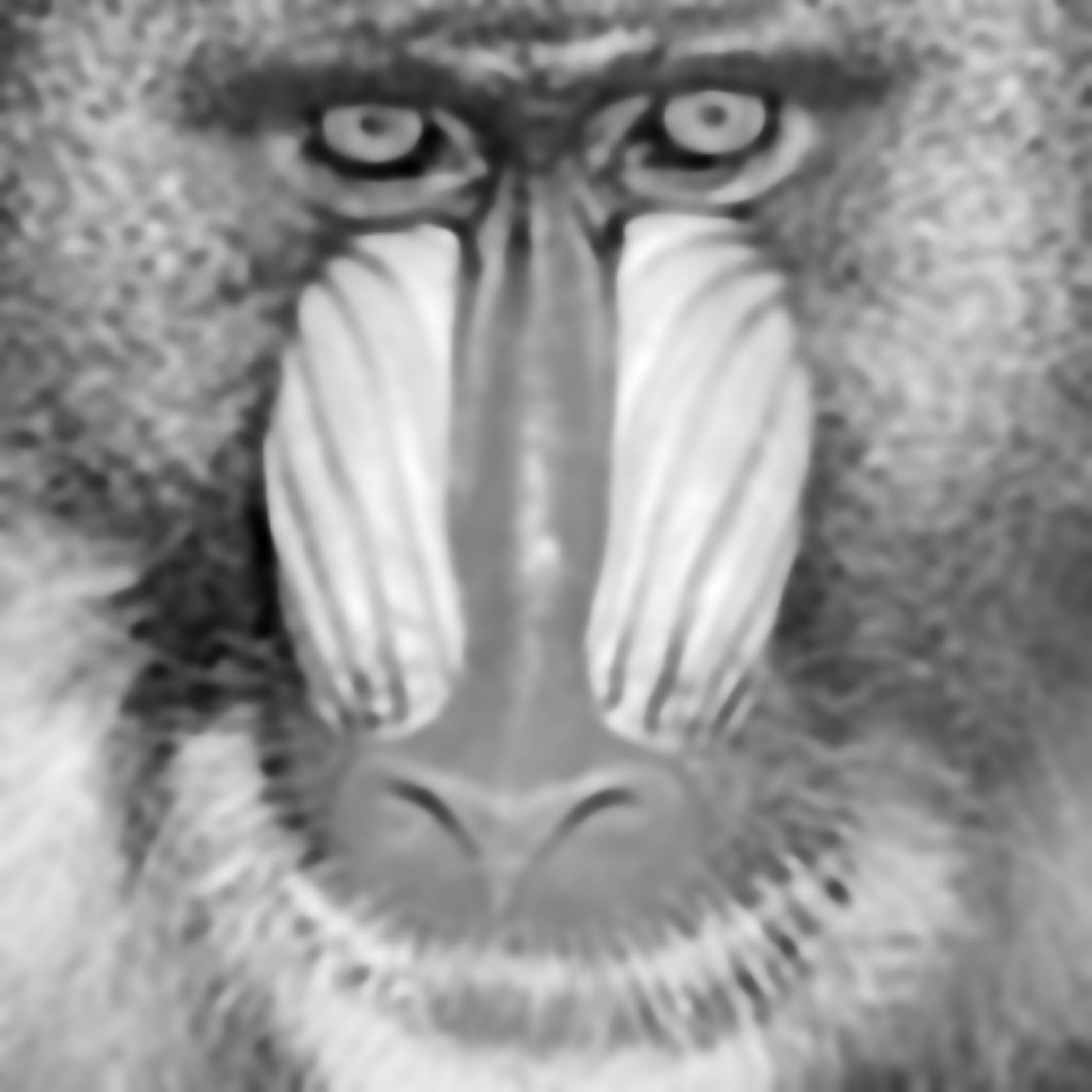}
  \hspace{2cm}
  \includegraphics[width=6cm]{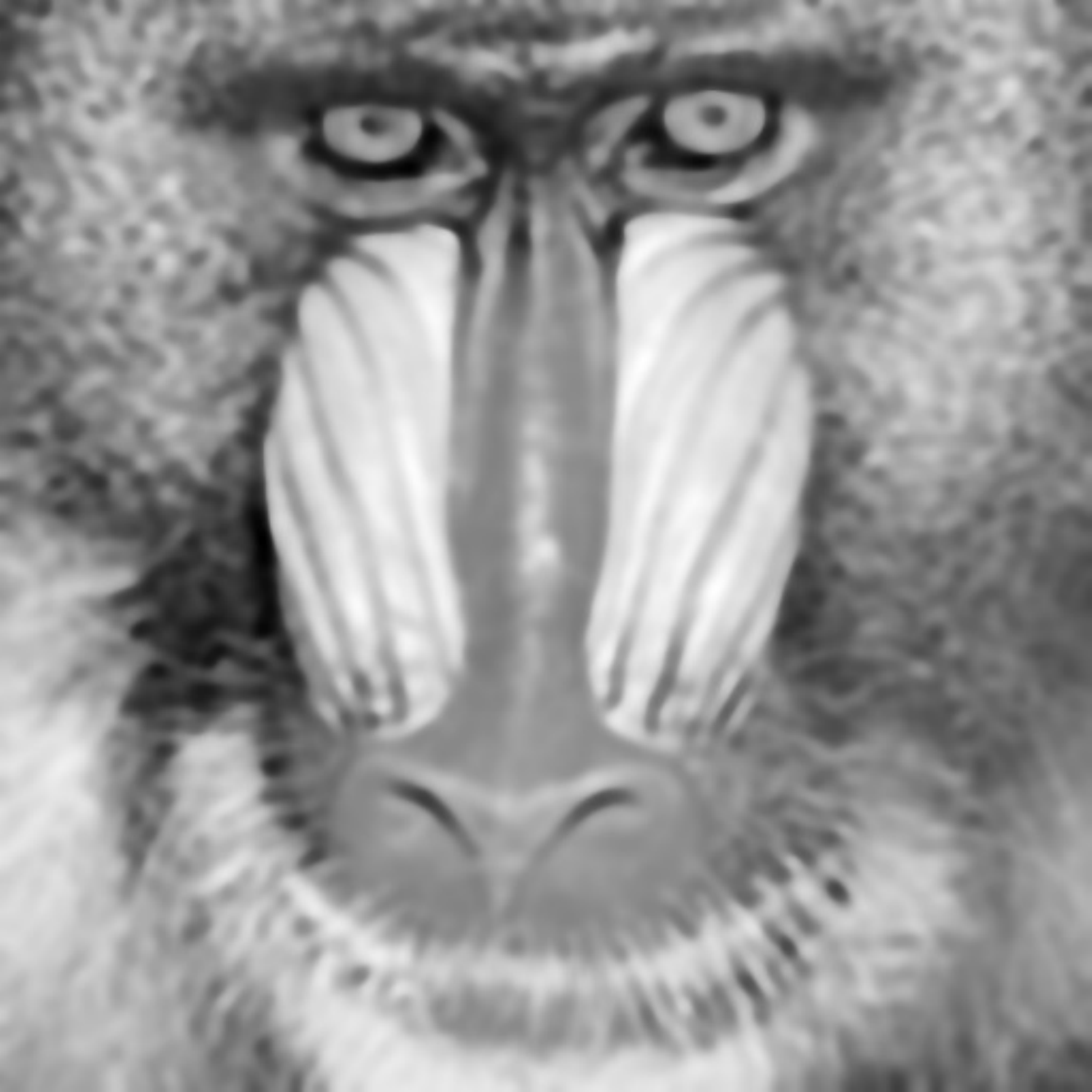}
  \end{center}  
  \caption{Original picture (top left), blurred picture (top right), TV-denoised reference picture with the explicit Euler method, step-size $\tau=0.01$ after $10000$ steps (bottom left), 
 discrete gradient denoised baboon with step-size $\tau=2.25$ after $10$ steps (bottom right).} \label{functblurbigsteppic}  
\end{figure}

We repeat the experiment with the step sizes $\tau=0.185, 0.18, 0.175$ (red dashed lines, top to bottom). The explicit Euler method converges for these step sizes.  The values of the functional of the explicit Euler method can also be seen in Figure~\ref{functblurbigstep}. The explicit Euler method obviously converges to an incorrect equilibrium  value of the function for these step sizes. The explicit Euler method with the step sizes $\tau=0.1, 0.01, 0.001$ (green dash-dotted lines, left to right) converge to the same equilibrium value of the functional as the discrete gradient method does for the step size $\tau=2.5$ (blue, solid line). But after $10000$ Euler steps with $\tau=0.001$, the reached value of the functional is $508.4131$ and still larger than $508.4069$, the value reached by the discrete gradient method with step size $\tau=2.25$ in step $30$ (blue, solid line).
An experiment with the discrete gradient method shows that the equilibrium picture computed by the discrete gradient method is the same for the different step sizes $\tau=2.5, 0.1, 0.185, 0.18, 0.175, 0.01, 0.001$. The equilibrium picture of the discrete gradient method after $10$ steps can be seen in Figure~\ref{functblurbigsteppic} in the bottom row on the right-hand side. 
The picture with step-size $\tau=0.01$ after $10000$ steps has been used as a reference for the correct equilibrium and can be seen at the bottom of Figure~\ref{functblurbigsteppic} on the left-hand side. The general observation is that the minimum value of the functional (and therefore the equilibrium image) found by the explicit Euler method clearly depends on the step size. This is unpleasant with respect to a reliable computation of minimizers (corresponding to smoothed images) for given smoothing parameters $\alpha$. On the contrary, the discrete gradient method seems to find the correct minimum value of the functional for a broad range of step sizes, due to the preservation of dissipativity.

\subsection{Grayscale image inpainting with TV regularization and adaptive step size} 
\label{sec:TVinpainting}

We consider the functional 
 \[
    T_\alpha(u)= \frac{1}{2} \int_{\Omega\setminus D} (u(x)-u_0(x))^2\,dx + \alpha TV(u),
  \]
where $TV$ is defined as in \eqref{TVfunctgraycont} and $D$ is a subset of $\Omega$ in which no information about $u_0$ is available. This can be the case because either the image is damaged in $D$ or the original scene in the image is occluded by something else in $D$. The task is to recover the original image in $D$ by minimizing the functional above. This is called image inpainting. The associated discretized functional is given by

\begin{equation} \label{inpaintVdisc}
    V_\alpha(u)=\frac{1}{2} \Delta x \Delta y \sum_{i=1}^{Nx} \sum_{j=1}^{Ny} \mathbb P\left(u_{i,j} - (u_0)_{i,j}\right)^2+ \alpha J(u),
  \end{equation}
where $J$ is given as before in \eqref{TVfunctgray} and $\mathbb P$ is a projection of the data fitting term onto the set of indices $(i,j)$ that our outside the inpainting domain $D$. 

In the following example, we compare the minimization of the TV-inpainting functional using the discrete gradient method with the Itoh \& Abe gradient \eqref{itoabe:discretegrad} and adaptive step size (as described below) with its minimization by the lagged-diffusivity method \cite{AcVog94,chan1999convergence}. In the latter, a minimizer of the TV-inpainting functional is characterized by a solution of the corresponding Euler-Lagrange equation and the following fixed-point iteration is performed 
\begin{equation}\label{laggeddiff}
0 = \mathrm{div}\left(\frac{\nabla u_{n+1}}{\sqrt{|\nabla u_n|^2+\epsilon}}\right) + \mathbbm{1}_{\Omega\setminus D}(u_0-u_{n+1}),
\end{equation}
evaluating the nonlinearity in the previous time-step only, and where $\mathbbm 1_{\Omega\setminus D}$ is the characteristic function of the set $\Omega\setminus D$. Here $\nabla u$ is discretized as in \eqref{TVfunctgray} with backward finite differences and its negative adjoint the divergence $\mathrm{div}$ by forward finite differences.

For the Itoh \& Abe discrete gradient approach we use a simple time step adaptation. In every iteration we compute two trial steps with time steps $\tau$ and $2\tau$ and choose the one that decreases $V_\alpha$ most. If the chosen solution corresponds to the time step $\tau$ then we halve the time step for the next step, otherwise we double it.

The example in Figure \ref{inpaintsurfDG} is a gray scale image of size $219\times 292$. The inpainting task is to remove the superimposed text from the image and replace it by the TV-interpolation of the surrounding gray values. Figure \ref{inpaintsurfenergies} reports the energy decrease for the Itoh-Abe discrete gradient method compared to the lagged-diffusivity iteration, and the evolution of the step sizes which were adaptively chosen throughout the discrete gradient iterations. In this experiment $\alpha=0.0001$ and $\beta=0.01$. 

Note that the equations to be solved in an Itoh \& Abe update for $V_\alpha$ under the Euclidean inner product as considered here uncouple to scalar equations. Yet, in Figure \ref{inpaintsurfenergies} it still appears to choose good descent directions even for large time steps. As one can also observe in Figure \ref{inpaintsurfenergies} the energy decrease with lagged-diffusivity is monotonic and faster than under the discrete gradient iteration. This qualitative behavior is representative for the application of lagged-diffusivity to convex functionals $V_\alpha$. Monotonicity, however, breaks in the case of non-convex functionals for which we will see in Subsection \ref{sec:TVnonconvex} the discrete gradient method still preserves monotonic decrease. 

\begin{figure}[h!] 
  \begin{center}
  \includegraphics[width=0.48\textwidth]{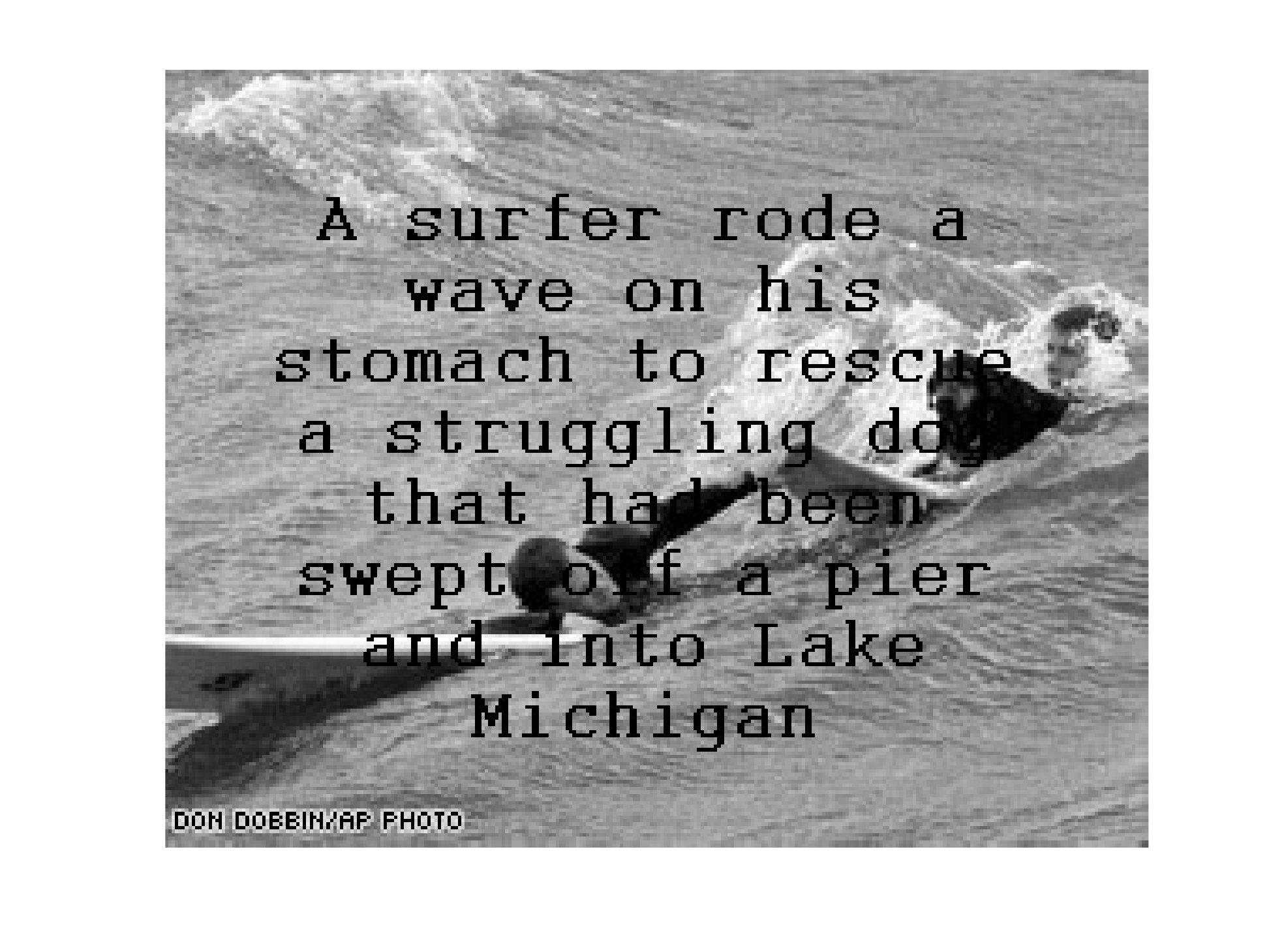}
  \includegraphics[width=0.48\textwidth]{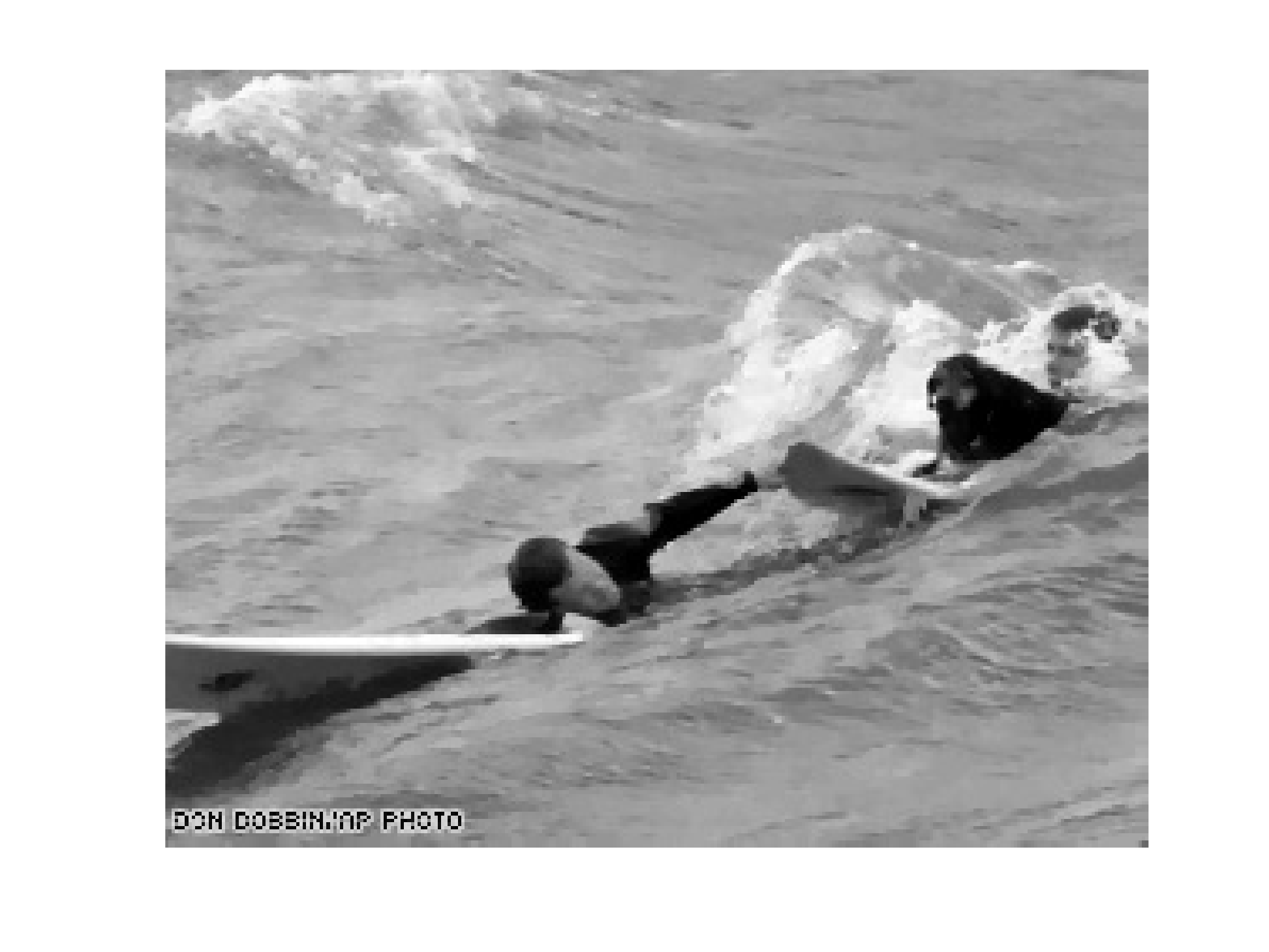}
  \end{center}
  \caption{Original image with superimposed writing and TV-inpainted image with DG method and adaptively chosen $\tau$.}
  \label{inpaintsurfDG}
\end{figure}

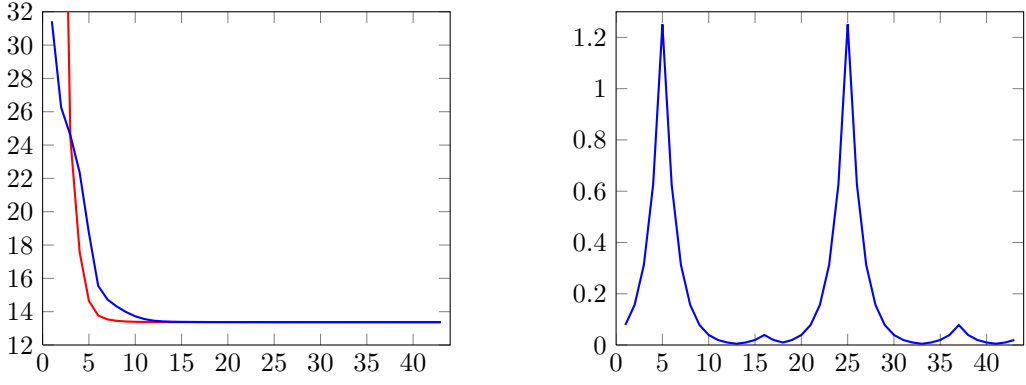
\begin{figure}[h!]
\begin{center}
\hspace{-1cm}
 {\small
   \begin{tikzpicture}
\begin{axis}[width=7cm,height=6cm,xmin=0.0,xmax=44,ymin=12,ymax=32, xtick={0, 5, 10, 15, 20, 25, 30, 35, 40, 45, 50}, ytick={12, 14, 16, 18, 20, 22, 24, 26, 28, 30, 32}, xticklabels={$0$, $5$, $10$, $15$, $20$, $25$, $30$, $35$, $40$, $45$, $50$},  yticklabels={$12$, $14$, $16$, $18$, $20$, $22$, $24$, $26$, $28$, $30$, $32$}, title={}, xlabel=\empty, ylabel=\empty]
\addplot[thick,color=red] coordinates {
( 1, 1750.4763 )
( 2, 55.3621 )
( 3, 24.3951 )
( 4, 17.5667 )
( 5, 14.6451 )
( 6, 13.7708 )
( 7, 13.5385 )
( 8, 13.4496 )
( 9, 13.4103 )
( 10, 13.3915 )
( 11, 13.3821 )
( 12, 13.377 )
( 13, 13.3741 )
( 14, 13.3724 )
( 15, 13.3714 )
( 16, 13.3707 )
( 17, 13.3703 )
( 18, 13.37 )
( 19, 13.3697 )
( 20, 13.3696 )
( 21, 13.3695 )
( 22, 13.3694 )
( 23, 13.3694 )
( 24, 13.3693 )
( 25, 13.3693 )
( 26, 13.3692 )
( 27, 13.3692 )
( 28, 13.3692 )
( 29, 13.3692 )
( 30, 13.3692 )
( 31, 13.3692 )
( 32, 13.3692 )
( 33, 13.3692 )
( 34, 13.3691 )
( 35, 13.3691 )
( 36, 13.3691 )
( 37, 13.3691 )
( 38, 13.3691 )
( 39, 13.3691 )
( 40, 13.3691 )
( 41, 13.3691 )
( 42, 13.3691 )
( 43, 13.3691 )
};
\addplot[thick,color=blue] coordinates {
( 1, 31.4287 )
( 2, 26.259 )
( 3, 24.5786 )
( 4, 22.3446 )
( 5, 18.7171 )
( 6, 15.553 )
( 7, 14.7289 )
( 8, 14.3131 )
( 9, 13.9808 )
( 10, 13.7244 )
( 11, 13.555 )
( 12, 13.4597 )
( 13, 13.415 )
( 14, 13.3964 )
( 15, 13.3888 )
( 16, 13.3842 )
( 17, 13.3809 )
( 18, 13.3786 )
( 19, 13.3768 )
( 20, 13.3756 )
( 21, 13.3749 )
( 22, 13.3743 )
( 23, 13.3737 )
( 24, 13.373 )
( 25, 13.3722 )
( 26, 13.3715 )
( 27, 13.3709 )
( 28, 13.3705 )
( 29, 13.3703 )
( 30, 13.3701 )
( 31, 13.37 )
( 32, 13.3698 )
( 33, 13.3697 )
( 34, 13.3696 )
( 35, 13.3695 )
( 36, 13.3694 )
( 37, 13.3694 )
( 38, 13.3693 )
( 39, 13.3693 )
( 40, 13.3693 )
( 41, 13.3692 )
( 43, 13.3692 )
};
\end{axis}
\end{tikzpicture} 
   \hspace{0.5cm}
   \begin{tikzpicture}
\begin{axis}[width=7cm,height=6cm,xmin=0.0,xmax=44, ymin=0,ymax=0.0013, xtick={0, 5, 10, 15, 20, 25, 30, 35, 40, 45, 50}, ytick={0, 0.0002, 0.0004, 0.0006, 0.0008, 0.001, 0.0012, 0.0014, 0.0016}, xticklabels={$0$, $5$, $10$, $15$, $20$, $25$, $30$, $35$, $40$, $45$, $50$},  yticklabels={$0$, $0.2$, $0.4$, $0.6$, $0.8$, $1$, $1.2$, $1.4$,$1.6$}, title={}, xlabel=\empty, ylabel=\empty]
\addplot[thick,color=blue] coordinates {
( 1, 7.8189e-05 )
( 2, 0.00015638 )
( 3, 0.00031275 )
( 4, 0.00062551 )
( 5, 0.001251 )
( 6, 0.00062551 )
( 7, 0.00031275 )
( 8, 0.00015638 )
( 9, 7.8189e-05 )
( 10, 3.9094e-05 )
( 11, 1.9547e-05 )
( 12, 9.7736e-06 )
( 13, 4.8868e-06 )
( 14, 9.7736e-06 )
( 15, 1.9547e-05 )
( 16, 3.9094e-05 )
( 17, 1.9547e-05 )
( 18, 9.7736e-06 )
( 19, 1.9547e-05 )
( 20, 3.9094e-05 )
( 21, 7.8189e-05 )
( 22, 0.00015638 )
( 23, 0.00031275 )
( 24, 0.00062551 )
( 25, 0.001251 )
( 26, 0.00062551 )
( 27, 0.00031275 )
( 28, 0.00015638 )
( 29, 7.8189e-05 )
( 30, 3.9094e-05 )
( 31, 1.9547e-05 )
( 32, 9.7736e-06 )
( 33, 4.8868e-06 )
( 34, 9.7736e-06 )
( 35, 1.9547e-05 )
( 36, 3.9094e-05 )
( 37, 7.8189e-05 )
( 38, 3.9094e-05 )
( 39, 1.9547e-05 )
( 40, 9.7736e-06 )
( 41, 4.8868e-06 )
( 42, 9.7736e-06 )
( 42, 9.7736e-06 )
( 43, 1.9547e-05 )
};
\end{axis}
\end{tikzpicture} 
 }
\end{center}
  \caption{Left: Energy decrease for TV-inpainting result in Figure \ref{inpaintsurfDG} with the Itoh-Abe DG method and adaptive step size (blue) and lagged diffusivity with $\tau=0.1$ (red). Right: Adaptive step sizes for the Itoh-Abe discrete gradient.}
  \label{inpaintsurfenergies}
\end{figure}

\subsection{Multichannel image denoising with TV regularization} \label{sec:TVmultichannel}
We check the fitness of the discrete gradient method for an application in the real world by an experiment with the discrete gradient method applied to an image processing task in macro photography. We use the multichannel model as described in \cite{ChanShenbook05} and first introduced in \cite{BloChan98}, which uses the $TV$-functional:
\[
TV_2[u]=\left( \sum_{i=1}^p \left(TV[u_i]\right)^2 \right)^{1\slash 2} 
       =\left( \sum_{i=1}^p \left( \int_\Omega |Du_i|\, dx\right)^2 \right)^{1\slash 2}  
\]
where $TV(u_i)$ is defined as in \eqref{TVfunctgraycont} for $p$ channels $u_i$, $i=1,\ldots,p$, in the denoising functional
\[
T_\alpha(u)=\alpha TV_2[u] + \frac{1}{2} \int_\Omega \|u-u_0\|^2\, dx\,.
\]
 With the global constants
\[
  c_i[u]=\frac{TV[u_i]}{TV_2[u]} \geq 0, \qquad i=1,\ldots,p\,,
\]
the Euler-Lagrange equilibrium system reads
\[
-\alpha c_i[u] \nabla \cdot \left[ \frac{\nabla u_i}{|\nabla u_i|_\beta}\right]
+  (u_i-u_{0,i}) = 0, \quad \left. \frac{\partial u_i}{\partial {\bf n}} \right|_{\partial \Omega} = 0, \quad i=1,\ldots,p\,.
\]
Time-marching leads to the gradient system
\[
\frac{d}{dt} u_i= \alpha \cdot c_i[u] \nabla \cdot \left[ \frac{\nabla u_i}{|\nabla u_i|_\beta}\right]
-  (u_i-u_{0,i}) = 0, \quad \left. \frac{\partial u_i}{\partial {\bf n}} \right|_{\partial \Omega} = 0, \quad i=1,\ldots,p\,.
\]
The discretized system is just given by using the discretized TV-functionals in $\mbox{TV}_2$. The corresponding equations are then solved. The situation is analogous to the case of grayscale image denoising.
\begin{proposition} The functional $V_\alpha$ corresponding to the discretized multichannel TV denoising functional possesses a unique minimizer and the sequence generated by any discrete gradient method with step sizes $0 < c \leq \tau_n \leq M < \infty$ converges to the unique minimizer.
\end{proposition}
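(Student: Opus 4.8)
The plan is to reduce the Proposition to the two theorems of Section~\ref{sec:PropDG} exactly as in the grayscale case, by showing that the discretized multichannel functional, which has the form
\[
V_\alpha(u) = \alpha \left( \sum_{i=1}^p J(u_i)^2 \right)^{1/2} + \tfrac12 \Delta x \Delta y \, \| u - u_0 \|^2 ,
\]
with $J$ as in \eqref{TVfunctgray} applied to each channel $u_i$, is bounded from below, coercive, continuously differentiable and strictly convex. Three of these carry over with no essential change. The coupling term is nonnegative and the data term is $\geq 0$, so $V_\alpha \geq 0$; the data term alone forces $V_\alpha(u) \to \infty$ as $\|u\| \to \infty$, giving coercivity; and since $\beta > 0$ each channel satisfies $J(u_i) \geq \Delta x \Delta y \, N_x N_y \sqrt{\beta} > 0$, so the map $u \mapsto (J(u_1),\dots,J(u_p))$ is $C^\infty$ into the open orthant $\{ y \in \R^p : y_i > 0 \}$, on which $y \mapsto \|y\|_2$ is $C^\infty$; hence $TV_2$, and therefore $V_\alpha$, is continuously differentiable (indeed $C^\infty$).

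The one genuinely new point is the convexity of the coupling term $u \mapsto \big( \sum_i J(u_i)^2 \big)^{1/2}$. First recall from the proof of Lemma~\ref{Denoisegraylem} that each $J(u_i)$ is a convex function of $u$, and, as just noted, nonnegative. Now invoke the standard composition principle: if $g : \R^p \to \R$ is convex and nondecreasing in each coordinate on $\R_{\geq 0}^p$ and $h_1,\dots,h_p$ are convex with $h_i \geq 0$, then $x \mapsto g(h_1(x),\dots,h_p(x))$ is convex (the quick proof: coordinatewise convexity of the $h_i$ puts $h(\lambda x + (1-\lambda)x')$ below $\lambda h(x) + (1-\lambda)h(x')$ in $\R_{\geq 0}^p$, then monotonicity of $g$ and then convexity of $g$ finish it). Applying this with $g(y) = \|y\|_2$ — convex on $\R^p$ and coordinatewise nondecreasing on $\R_{\geq 0}^p$ since $\partial_{y_k}\|y\|_2 = y_k/\|y\|_2 \geq 0$ there — and $h_i = J(u_i)$ yields convexity of $TV_2$ at the discrete level. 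The data fidelity $\tfrac12 \Delta x \Delta y \, \|u - u_0\|^2$ has Hessian $\Delta x \Delta y \, I$, which is positive definite, so it is strictly convex; hence the sum $V_\alpha$ is strictly convex.

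With all hypotheses of Theorem~\ref{seqtostat} and of the subsequent strictly convex theorem verified, those results apply verbatim, giving the existence of a unique minimizer $x_*$ of $V_\alpha$ together with $\lim_{n\to\infty} x_n = x_*$ for any discrete gradient iteration with step sizes $0 < c \leq \tau_n \leq M < \infty$. I expect the only mild obstacle to be the bookkeeping in the composition argument: one must notice that $\beta > 0$ simultaneously keeps the vector $(J(u_1),\dots,J(u_p))$ bounded away from the origin (so $\|\cdot\|_2$ is differentiable there) and does not disturb the coordinatewise monotonicity used for convexity. Everything else is a direct transcription of the grayscale proof of Lemma~\ref{Denoisegraylem} and its corollary.
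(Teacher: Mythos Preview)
Your proposal is correct and follows exactly the route the paper's proof sketches: verify that the discretized multichannel functional is bounded below, coercive, continuously differentiable and strictly convex, then invoke Theorem~\ref{seqtostat} together with the strict-convexity theorem. The paper's own proof is a two-line pointer to Lemma~\ref{Denoisegraylem} and Theorem~\ref{seqtostat}; you have simply supplied the details the paper omits, most notably the composition argument (convex nondecreasing $\|\cdot\|_2$ on $\R_{\geq 0}^p$ composed with the convex nonnegative channelwise $J$'s) for the convexity of the discretized $TV_2$ term, and the observation that $\beta>0$ keeps $(J(u_1),\dots,J(u_p))$ away from the origin so that $\|\cdot\|_2$ is differentiable there.
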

\begin{proof}
With the help of Lemma~\ref{Denoisegraylem}, one can check that the discretized functional is bounded from below, coercive, continuously differentiable and strictly convex. The statement then follows from Theorem~\ref{seqtostat}.
\end{proof}

Encouraged by the results for the smaller test images before, we apply the discrete gradient method with the midpoint discrete gradient to a real world denoising problem. The picture at the top of Figure~\ref{lice} is an original photography of some plant lice. The picture has been taken with a strong macro lens, the Canon MP-E 65mm macro lens, that exhibits an extremely low depth-of-field, ranging from 2.24mm at f/16 at 1x magnification, and a minimum of 0.048mm at f/2.8 at 5x magnification. As a camera, a Canon EOS 550D camera has been used, hand-held in full sunlight, with an exposure time of 1/250 and  f-stop number 14 at 3x magnification. The film speed has been set to ISO 6400, which was needed due to make an exposure time of 1/250 possible. The drawback of this approach to take macro photos without flash is that the high film speed produces a lot of noise due to the necessary amplification of the signal from the charge-coupled device (CCD) image sensor. This real-life noise can clearly be seen in the picture at the top of  Figure~\ref{lice} and in the picture detail on the left-hand side in Figure~\ref{licesmall}. The image size in width~$\times$~height is 5184~$\times$~3456. The overall denoising gradient system for an RGB picture therefore is of dimension $n=3~\times~5184~\times~3456=53747712$.
\begin{figure}   
  \begin{center}
    \includegraphics[width=6.5cm]{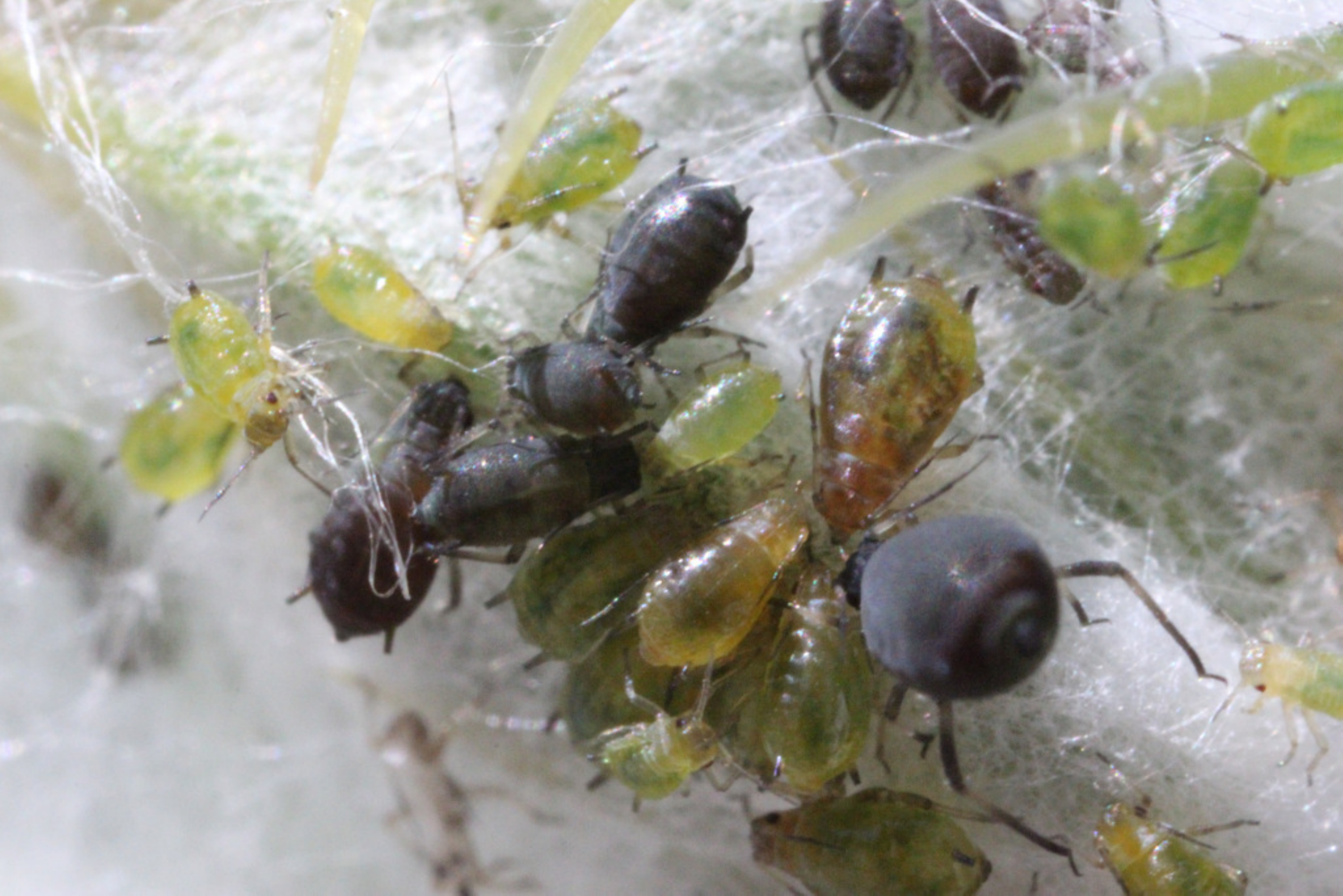}
    \hspace{0.5cm}
    \includegraphics[width=6.5cm]{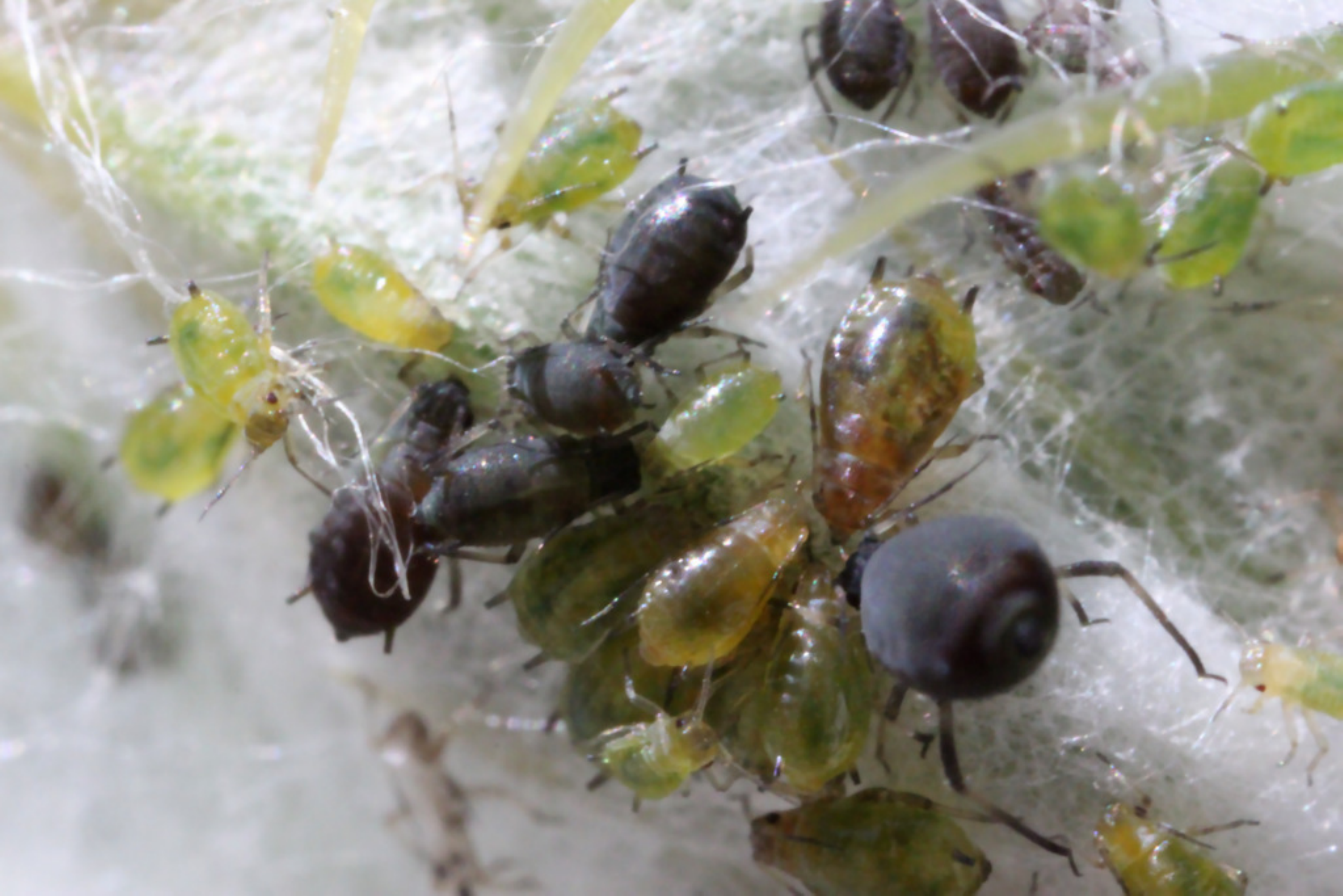}
  \end{center}
  \caption{Original (left) and denoised (right) lice picture, \copyright Volker Grimm} \label{lice}
\end{figure}

\begin{figure}                     
  \begin{center}
    \includegraphics[width=6.5cm]{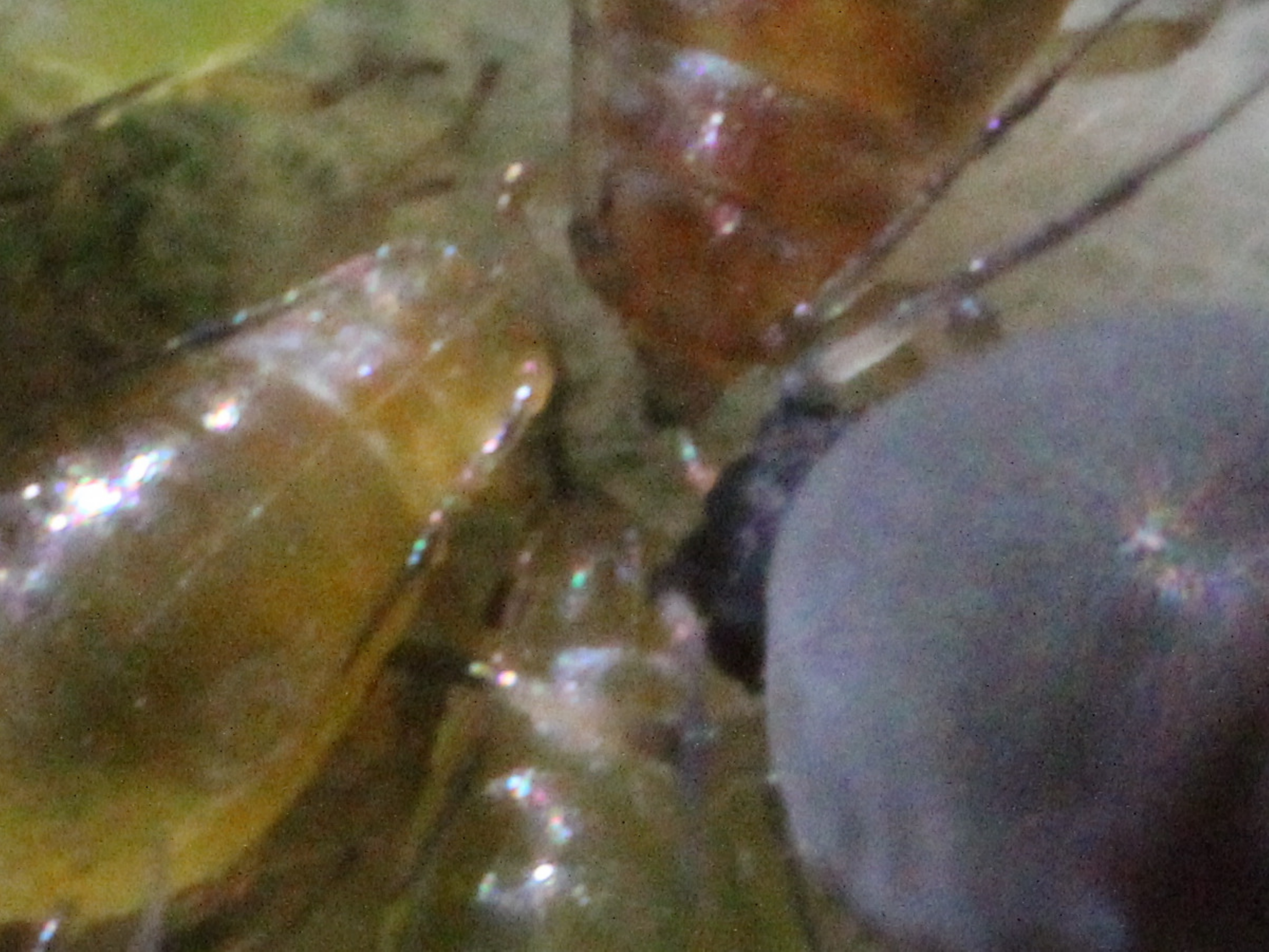}
    \hspace{0.5cm}
    \includegraphics[width=6.5cm]{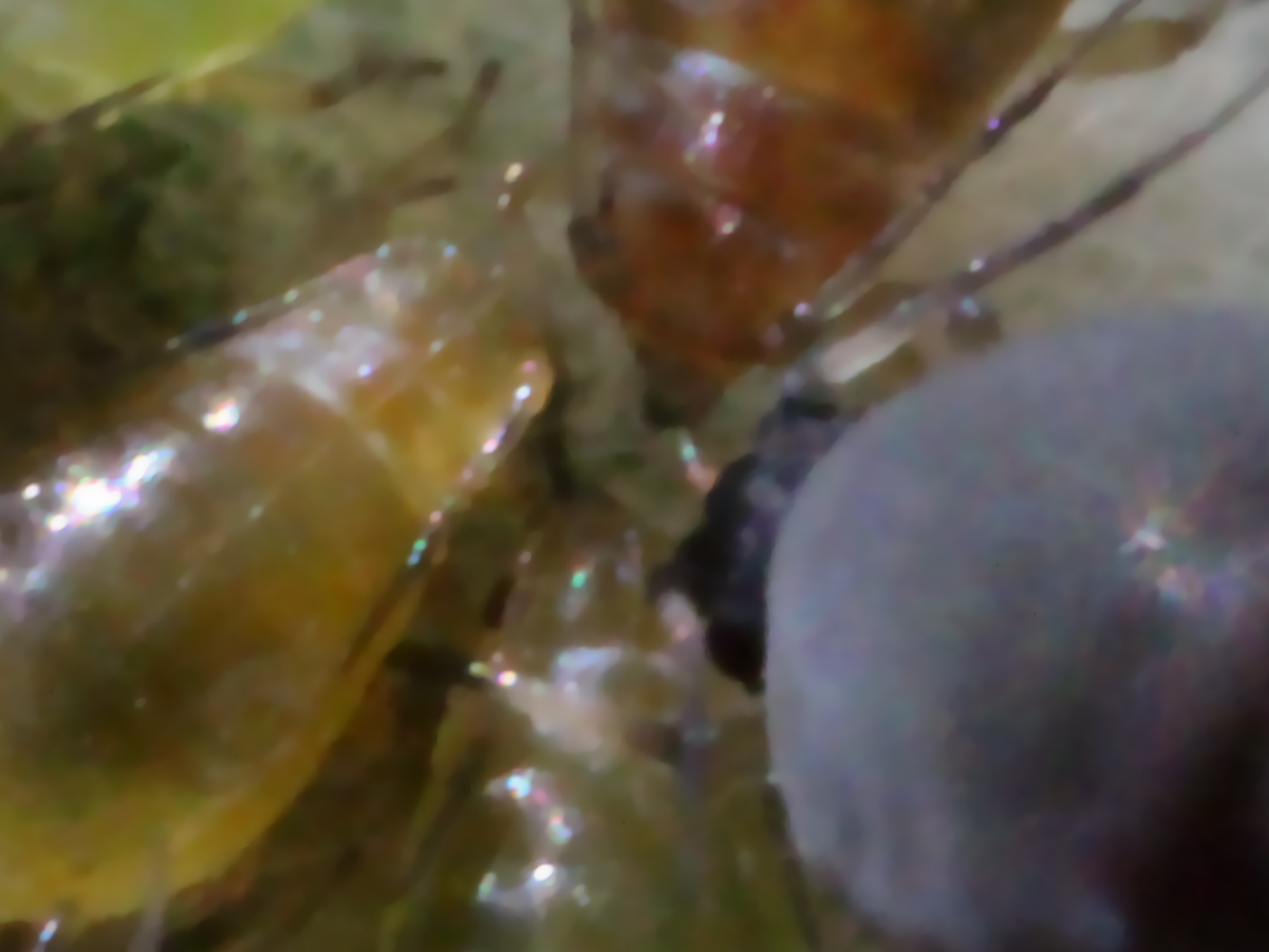}
  \end{center}
  \caption{Zoom of original image (left) and denoised image (right), \copyright Volker Grimm} \label{licesmall}
\end{figure}
Despite the size of the system, the discrete gradient method preserves the dissipativity and converges in $10$ steps with step size $\tau=0.2$ to the equilibrium picture.
The image has been rescaled to pixel size $\Delta x=\Delta y=1$, the image data has been in the interval $[0,255]$, and the constants have been  chosen as $\beta=1$ and $\alpha=100$.  
In Figure~\ref{lice} and in the detail in Figure~\ref{licesmall}, one can see that the discrete gradient method successfully removes the noise from the original lice photography.  

\subsection{A non-convex example: grayscale image denoising with TV$^p$ regularization, $0<p<1$} \label{sec:TVnonconvex}
To motivate this regularization, we consider first $\ell_0$-minimization, where for $u\in\mathbb R^n$
$$
\|u\|_0= \textrm{ the number of non-zero entries in $u$},
$$
which is designed to promote sparsity in $u$. Solving the $\ell_0$ problem problem is in general NP-hard and therefore its convex relaxation, namely $\ell_1$-minimization, is considered in most sparse reconstruction approaches \cite{candes2006robust}. In this context, TV regularization can be seen as a convex approximation to a regularization which promotes sparsity of the gradient. Several papers indicate, however, that interesting regularization effects can be observed when studying regularizers that are $\ell_p$ norms in between $\ell_0$ and $\ell_1$, namely penalties of the form
$$
\|u\|_p^p = \sum |u_{i,j}|^p, \textrm{ with } 0<p<1,
$$  
compare \cite{nikolova2010fast} for instance. Recently this consideration has been extended to the case of the gradient in \cite{hintermüller2013nonconvex} where the authors study TV$^p$ regularization, that is 
$$
TV^p(u) = \|\nabla u\|_p^p = \int_\Omega \left(\left(\frac{\partial u}{\partial x}\right)^2
    + \left( \frac{\partial u}{\partial y}\right)^2\right)^{p/2}~ d(x,y), \textrm{ with } 0<p<1,
$$
Analogous to before, for discrete $u$ we consider the discretized and smoothed TV$^p$ functional
\begin{equation} \label{TVpfunctgray}
     J(u)=\Delta x \Delta y \sum_{i=1}^{N_x} \sum_{j=1}^{N_y} \psi \left( (D^x_{ij}u)^2 
     + (D^y_{ij}u)^2 \right),
\end{equation}
with $D^x$, $D^y$ given as in Section \ref{sec:TVdenoising} and $\psi(t)=(t+\beta)^{p/2}$ with $0<p<1$, and its corresponding denoising functional
$$
    V_\alpha(u)=\frac{1}{2} \Delta x \Delta y \sum_{i=1}^{Nx} \sum_{j=1}^{Ny} \left(u_{i,j} - (u_0)_{i,j}\right)^2+ \alpha J(u).
$$
As in Subsection \ref{sec:TVinpainting} we employ the Itoh \& Abe discrete gradient with adaptive step size selection. In Figure \ref{fig:legodenoiseTVp} we show a de-noising result with TV$^p$ regularization \eqref{TVpfunctgray} and $p=0.8$ and $p=0.2$. Since $V_\alpha$ is non-convex this time, we consider the behavior of the discrete gradient flow for two different initializations. We  initialize the discrete gradient flow once with the noisy image $u_0$ and once with a random initialization (randomly choosing the intensity in every pixel of the initial state). For both initializations the discrete gradient flow seems to converge to a decent critical point of $V_\alpha$, where $\alpha$ was chosen $0.05$ for $p=0.8$ and $\alpha=0.5$ for $p=0.2$. In fact, in both cases both critical points seem to converge to a similar energy level, compare Figure \ref{fig:legodenoiseTVpenerg}. Note also, as $p$ decreases, the gradient of the image at the computed minimum becomes sparser.

\begin{figure}[h!]
\centering
\includegraphics[width=0.4\textwidth]{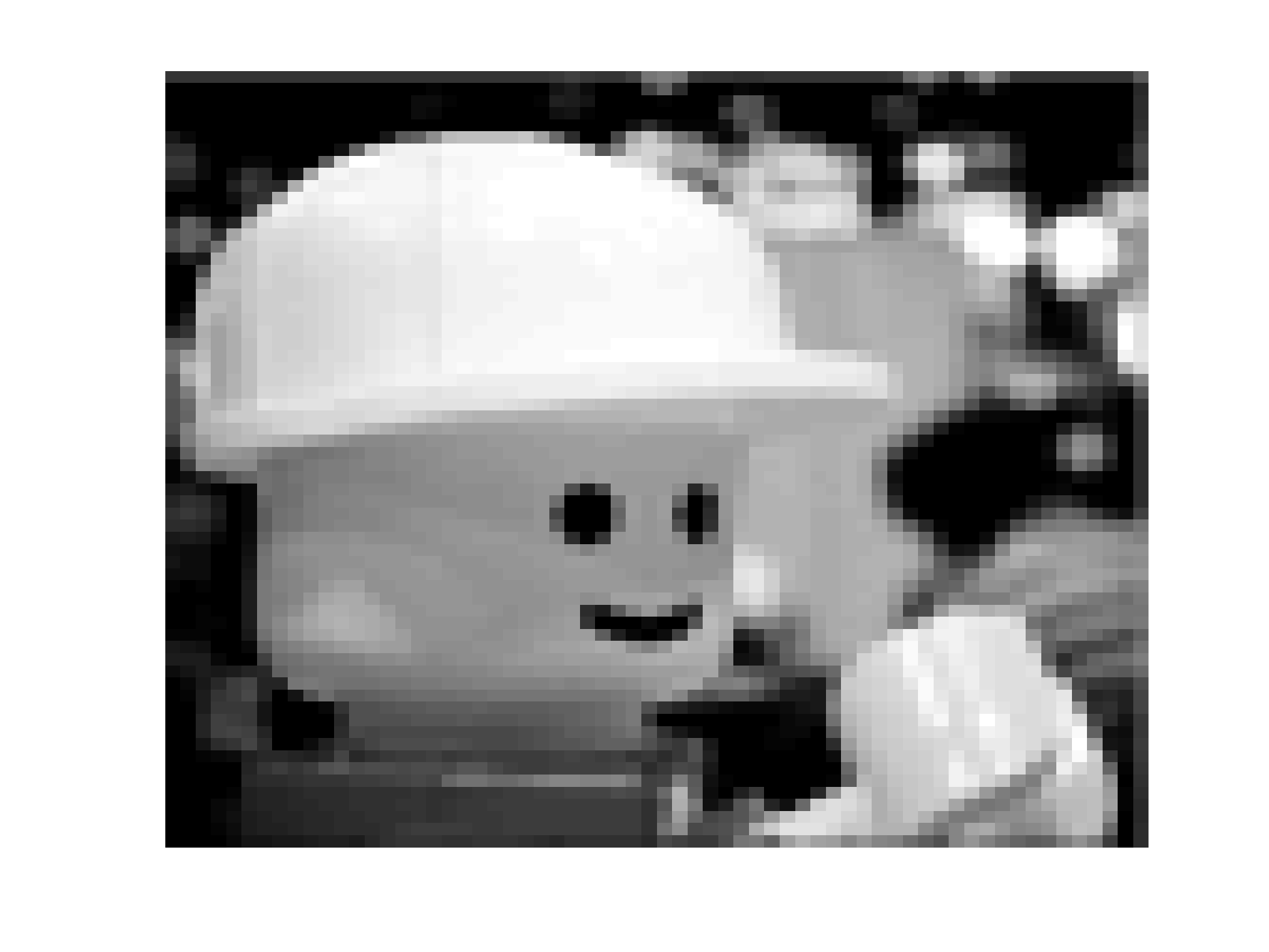}
\includegraphics[width=0.4\textwidth]{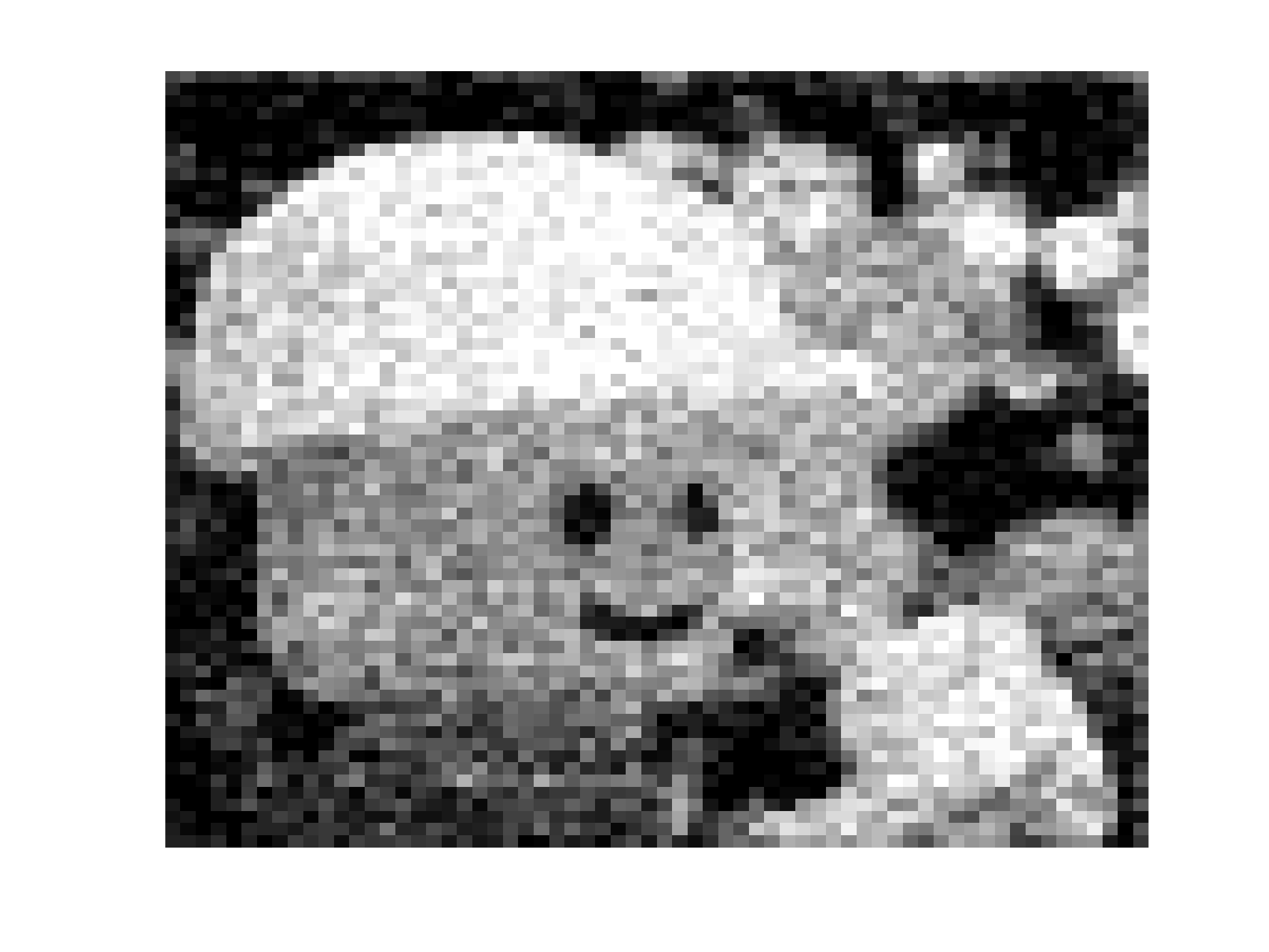}\\
\includegraphics[width=0.4\textwidth]{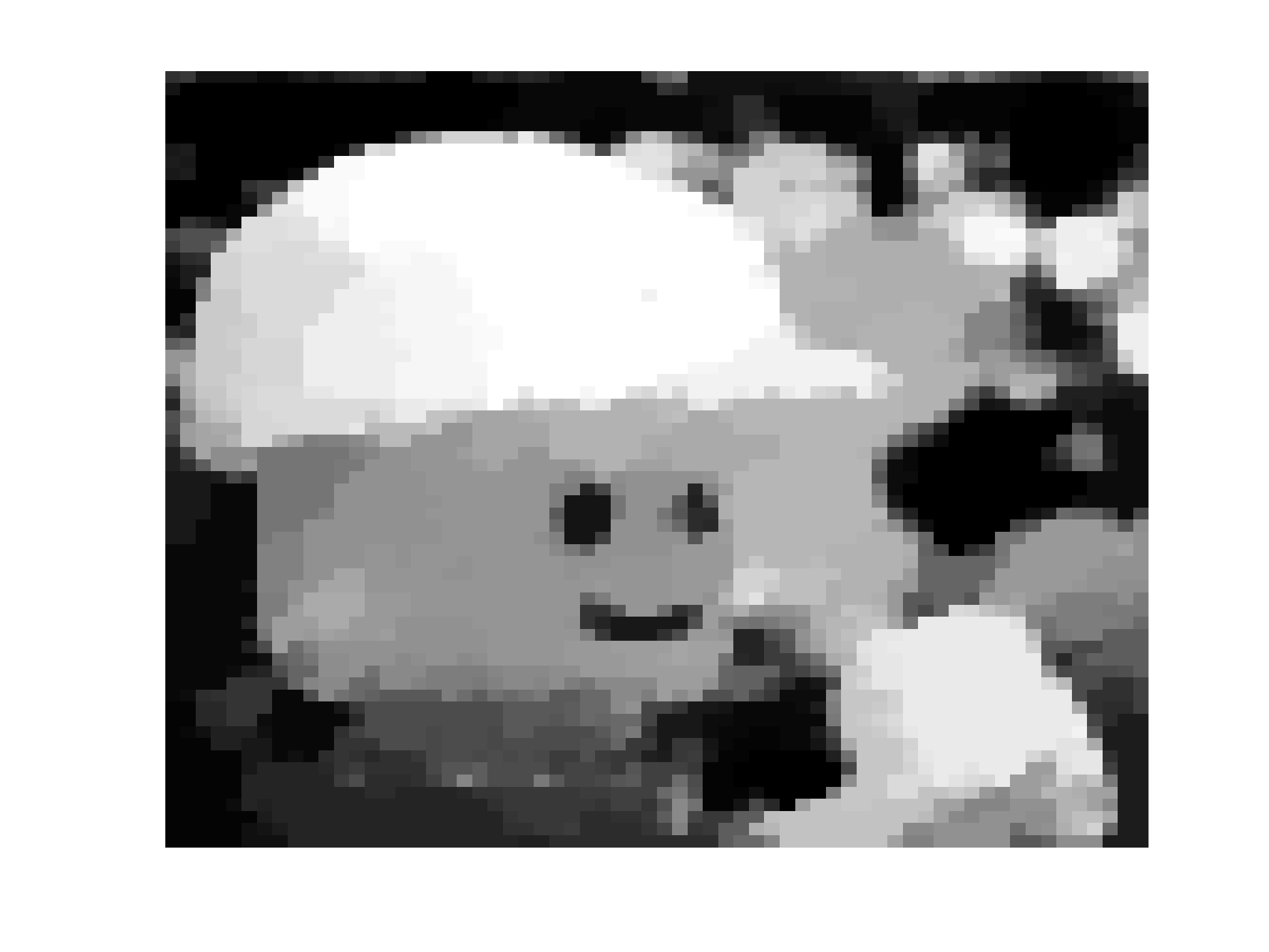}
\includegraphics[width=0.4\textwidth]{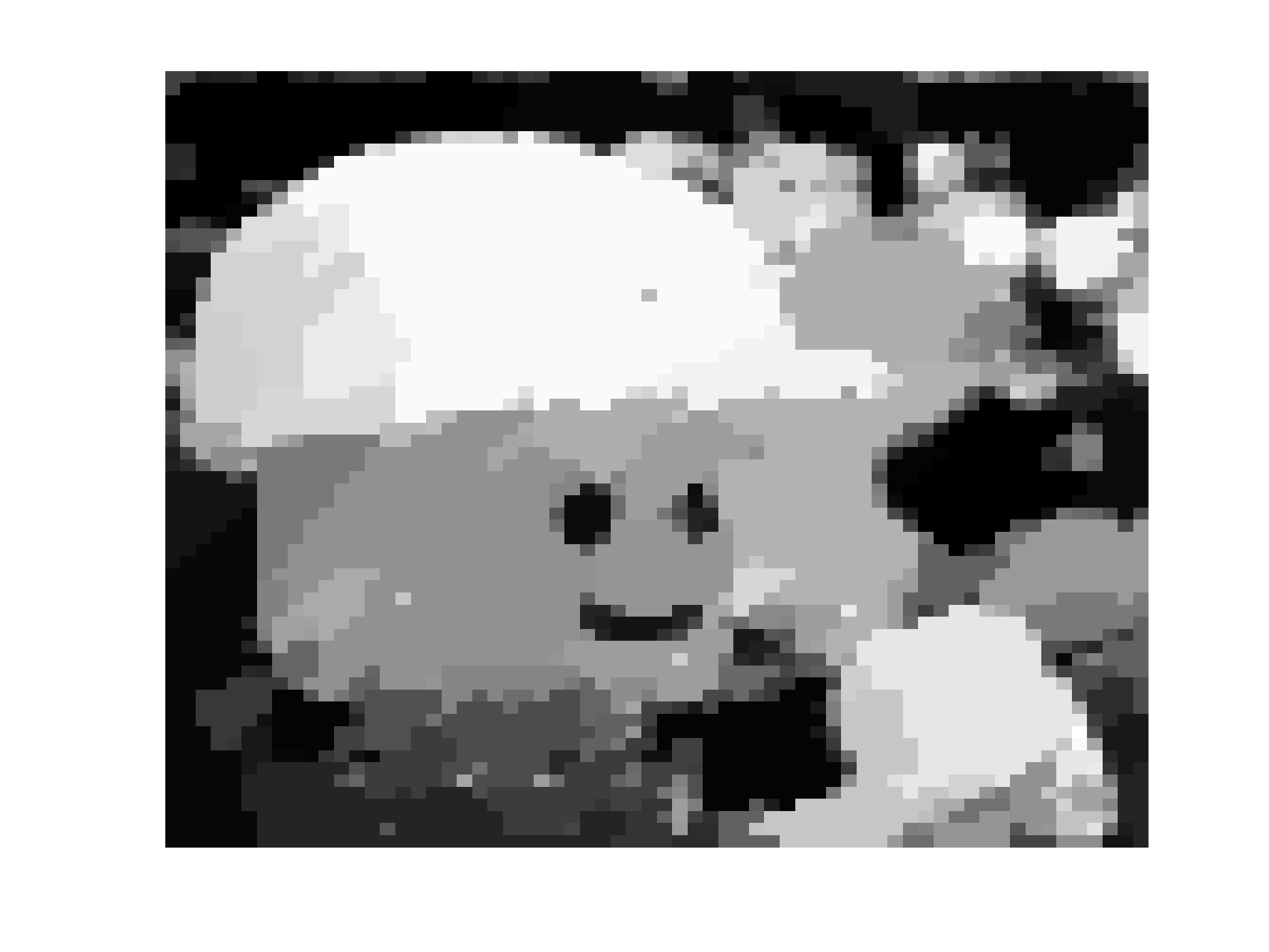}\\
\includegraphics[width=0.4\textwidth]{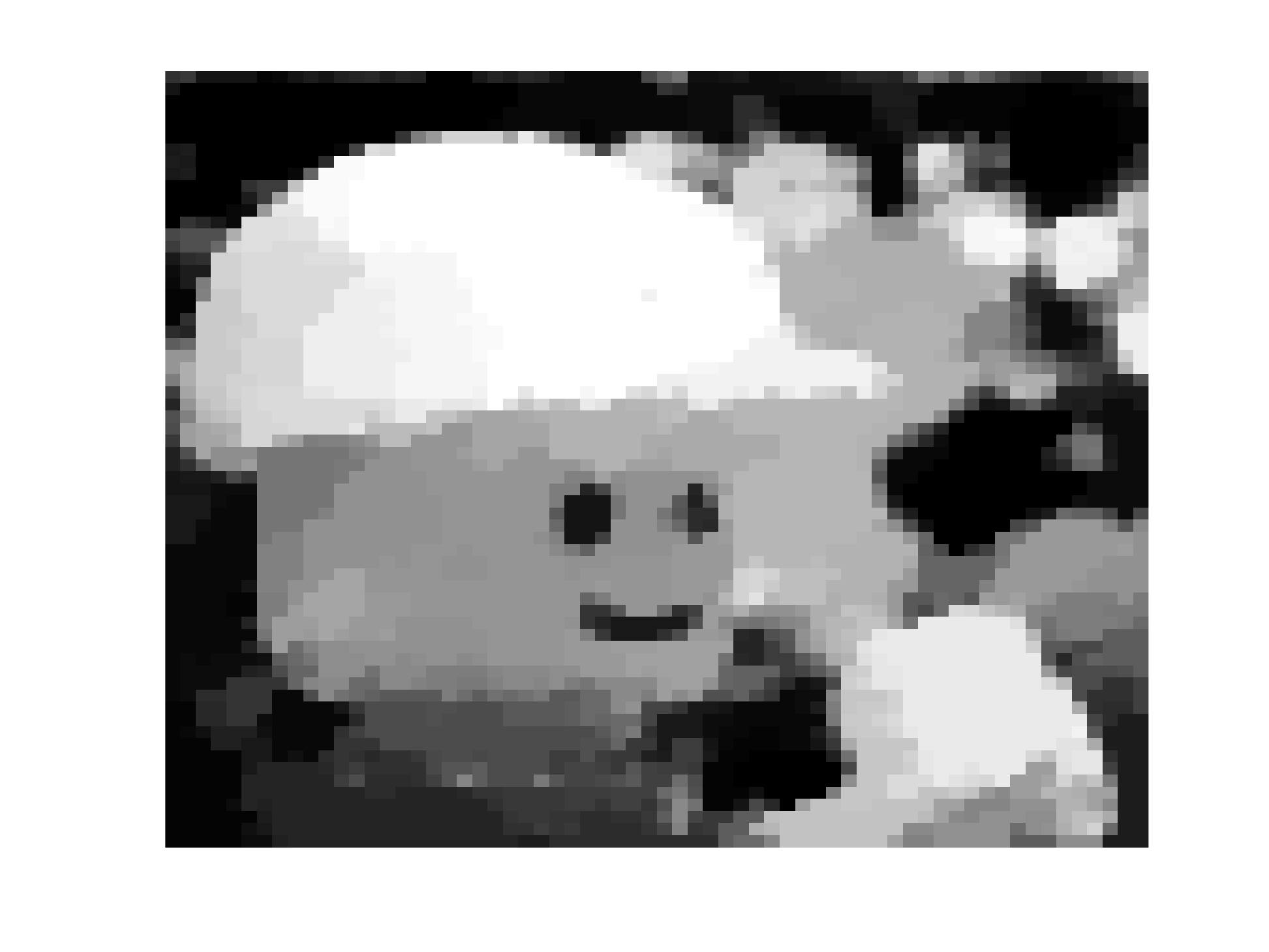}
\includegraphics[width=0.4\textwidth]{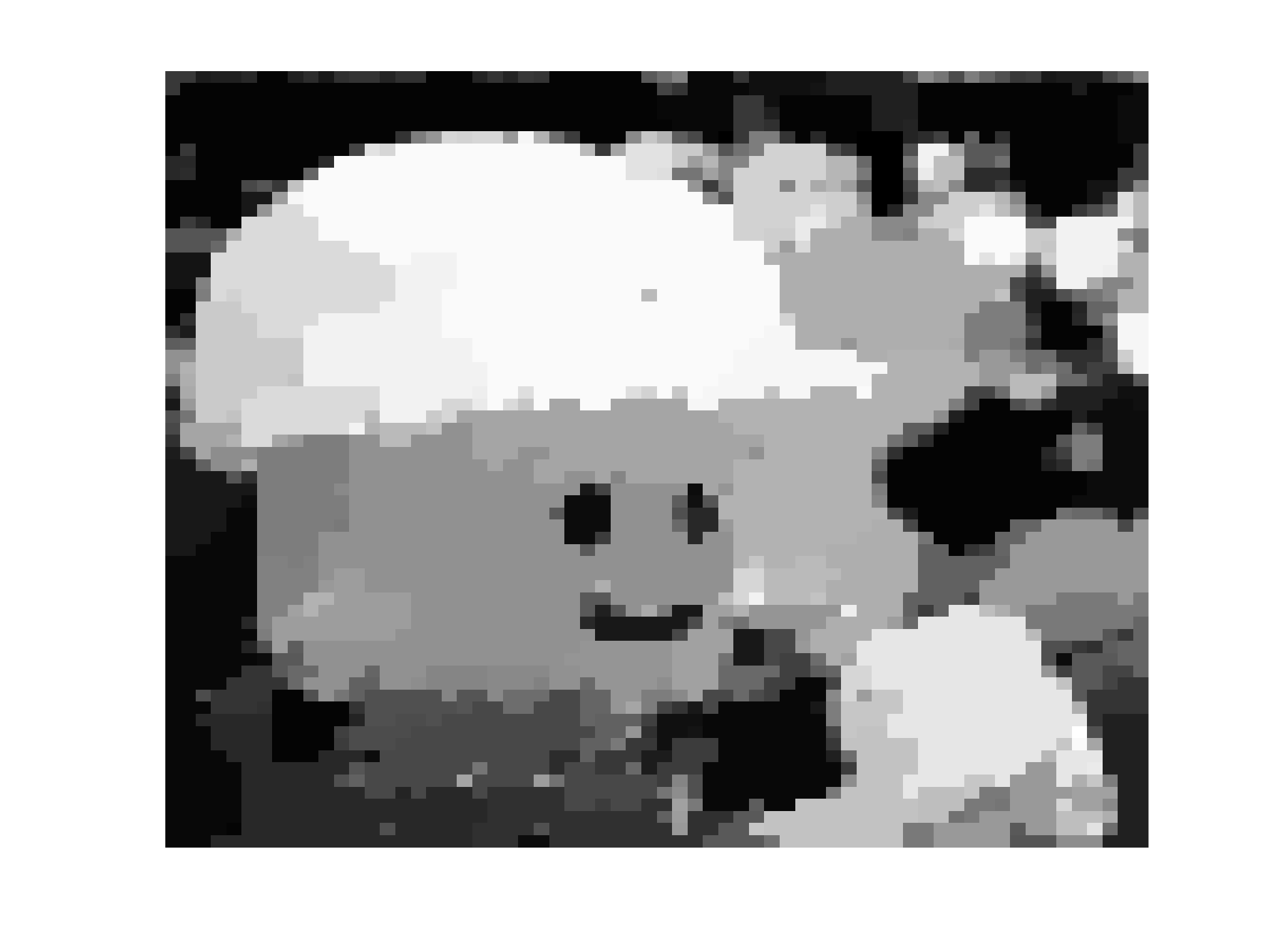}
\caption{TV$^p$ denoising with the Itoh \& Abe discrete gradient for $p=0.8$ (first column) and $p=0.2$ (second column). First row: Original and noisy image; second row: TV$^p$ denoising result with noisy image $u_0$ as initial condition; third row: TV$^p$ denoising result with random initial condition.}
\label{fig:legodenoiseTVp}
\end{figure}

\begin{figure}[h!]
\begin{center}
\hspace{-1cm}
{\tiny
\begin{tikzpicture}
\begin{axis}[width=7.5cm,height=5cm,xmin=0.0,xmax=20.5,ymin=0,ymax=70, xtick={0,2,4,6,8,10,12,14,16,18,20}, ytick={0,10,20,30,40,50,60,70}, xticklabels={$0$, $2$, $4$, $6$, $8$, $10$, $12$, $14$, $16$, $18$, $20$},  yticklabels={$0$, $10$, $20$, $30$, $40$, $50$, $60$, $70$}, title={}, xlabel=\empty, ylabel=\empty]
\addplot[thick,color=red] coordinates {
( 1, 7.1581 )
( 2, 5.2183 )
( 3, 4.7627 )
( 4, 4.5492 )
( 5, 4.4468 )
( 6, 4.3979 )
( 7, 4.3748 )
( 8, 4.3595 )
( 9, 4.3495 )
( 10, 4.3426 )
( 11, 4.3366 )
( 12, 4.3317 )
( 13, 4.327 )
( 14, 4.3227 )
( 15, 4.3195 )
( 16, 4.3169 )
( 17, 4.315 )
( 18, 4.3134 )
( 19, 4.3119 )
( 20, 4.3105 )
};
\addplot[thick,color=blue] coordinates {
( 1, 63.6871 )
( 2, 7.2108 )
( 3, 5.1443 )
( 4, 4.802 )
( 5, 4.6189 )
( 6, 4.4935 )
( 7, 4.4212 )
( 8, 4.379 )
( 9, 4.3585 )
( 10, 4.3487 )
( 11, 4.3422 )
( 12, 4.3366 )
( 13, 4.3319 )
( 14, 4.3279 )
( 15, 4.3244 )
( 16, 4.3213 )
( 17, 4.3187 )
( 18, 4.3165 )
( 19, 4.3146 )
( 20, 4.3131 )
};
\draw[red, thick]  (axis cs: 10,62) -- (axis cs: 11.5,62);
\draw[blue, thick] (axis cs: 10,55) -- (axis cs: 11.5,55);
\node[right] at (axis cs: 12, 62) {initial noisy image};
\node[right] at (axis cs: 12, 55) {initial random};
\end{axis}
\end{tikzpicture} 
\begin{tikzpicture}
\begin{axis}[width=7.5cm,height=5cm,xmin=-5,xmax=205,ymin=0,ymax=7, xtick={0,20,40,60,80,100,120,140,160,180,200}, ytick={1, 2, 3, 4, 5, 6, 7}, xticklabels={$0$, $20$, $40$, $60$, $80$, $100$, $120$, $140$, $160$, $180$, $200$},  yticklabels={$1$, $2$, $3$, $4$, $5$, $6$, $7$}, title={}, xlabel=\empty, ylabel=\empty]
\addplot[thick,color=red] coordinates {
( 1, 1.5765 )
( 2, 1.4895 )
( 3, 1.4194 )
( 4, 1.3827 )
( 5, 1.3575 )
( 6, 1.3344 )
( 7, 1.3142 )
( 8, 1.2959 )
( 9, 1.2812 )
( 10, 1.2706 )
( 11, 1.2641 )
( 12, 1.2607 )
( 13, 1.2585 )
( 14, 1.2569 )
( 15, 1.2555 )
( 16, 1.2538 )
( 17, 1.2521 )
( 18, 1.2503 )
( 19, 1.2484 )
( 20, 1.2464 )
( 21, 1.2444 )
( 22, 1.2423 )
( 23, 1.2403 )
( 24, 1.2384 )
( 25, 1.2367 )
( 26, 1.2356 )
( 27, 1.2346 )
( 28, 1.2335 )
( 29, 1.2325 )
( 30, 1.2318 )
( 31, 1.2311 )
( 32, 1.2302 )
( 33, 1.2288 )
( 34, 1.2269 )
( 35, 1.2248 )
( 36, 1.2221 )
( 37, 1.2181 )
( 38, 1.2139 )
( 39, 1.2118 )
( 40, 1.2096 )
( 41, 1.2067 )
( 42, 1.2032 )
( 43, 1.1988 )
( 44, 1.1934 )
( 45, 1.1875 )
( 46, 1.1819 )
( 47, 1.1777 )
( 48, 1.1753 )
( 49, 1.1743 )
( 50, 1.1737 )
( 51, 1.1732 )
( 52, 1.1728 )
( 53, 1.1723 )
( 54, 1.1719 )
( 55, 1.1713 )
( 56, 1.171 )
( 57, 1.1707 )
( 58, 1.1703 )
( 59, 1.17 )
( 60, 1.1697 )
( 61, 1.1695 )
( 62, 1.1693 )
( 63, 1.1691 )
( 64, 1.1688 )
( 65, 1.1684 )
( 66, 1.168 )
( 67, 1.1676 )
( 68, 1.1673 )
( 69, 1.167 )
( 70, 1.1668 )
( 71, 1.1665 )
( 72, 1.1662 )
( 73, 1.1659 )
( 74, 1.1657 )
( 75, 1.1655 )
( 76, 1.1652 )
( 77, 1.1649 )
( 78, 1.1647 )
( 79, 1.1644 )
( 80, 1.1643 )
( 81, 1.1641 )
( 82, 1.1638 )
( 83, 1.1635 )
( 84, 1.1632 )
( 85, 1.1628 )
( 86, 1.1624 )
( 87, 1.162 )
( 88, 1.1616 )
( 89, 1.1613 )
( 90, 1.161 )
( 91, 1.1608 )
( 92, 1.1606 )
( 93, 1.1604 )
( 94, 1.1603 )
( 95, 1.1602 )
( 96, 1.1601 )
( 97, 1.1599 )
( 98, 1.1598 )
( 99, 1.1597 )
( 100, 1.1595 )
( 101, 1.1594 )
( 102, 1.1593 )
( 103, 1.1591 )
( 104, 1.159 )
( 105, 1.159 )
( 106, 1.1589 )
( 107, 1.1588 )
( 108, 1.1587 )
( 109, 1.1587 )
( 110, 1.1586 )
( 111, 1.1585 )
( 112, 1.1584 )
( 113, 1.1583 )
( 114, 1.1582 )
( 115, 1.1581 )
( 116, 1.158 )
( 117, 1.1579 )
( 118, 1.1578 )
( 119, 1.1577 )
( 120, 1.1575 )
( 121, 1.1573 )
( 122, 1.1572 )
( 123, 1.1571 )
( 124, 1.157 )
( 125, 1.1568 )
( 126, 1.1565 )
( 127, 1.1562 )
( 128, 1.1559 )
( 129, 1.1557 )
( 130, 1.1556 )
( 131, 1.1555 )
( 132, 1.1554 )
( 133, 1.1553 )
( 134, 1.1553 )
( 135, 1.1552 )
( 136, 1.1551 )
( 137, 1.155 )
( 138, 1.1549 )
( 139, 1.1548 )
( 140, 1.1547 )
( 141, 1.1545 )
( 142, 1.1542 )
( 143, 1.154 )
( 144, 1.1534 )
( 145, 1.1529 )
( 146, 1.1522 )
( 147, 1.1515 )
( 148, 1.1511 )
( 149, 1.1509 )
( 150, 1.1508 )
( 151, 1.1506 )
( 152, 1.1504 )
( 153, 1.1501 )
( 154, 1.1496 )
( 155, 1.1489 )
( 156, 1.148 )
( 157, 1.1468 )
( 158, 1.1452 )
( 159, 1.1432 )
( 160, 1.1411 )
( 161, 1.1395 )
( 162, 1.1384 )
( 163, 1.1379 )
( 164, 1.1378 )
( 165, 1.1377 )
( 166, 1.1376 )
( 167, 1.1375 )
( 168, 1.1375 )
( 169, 1.1374 )
( 170, 1.1373 )
( 171, 1.1372 )
( 172, 1.1371 )
( 173, 1.1371 )
( 174, 1.1371 )
( 175, 1.137 )
( 176, 1.137 )
( 177, 1.137 )
( 178, 1.1369 )
( 179, 1.1369 )
( 180, 1.1369 )
( 181, 1.1369 )
( 182, 1.1368 )
( 183, 1.1368 )
( 184, 1.1368 )
( 185, 1.1367 )
( 186, 1.1367 )
( 187, 1.1367 )
( 188, 1.1367 )
( 189, 1.1367 )
( 190, 1.1366 )
( 191, 1.1366 )
( 192, 1.1365 )
( 193, 1.1364 )
( 194, 1.1363 )
( 195, 1.1363 )
( 196, 1.1362 )
( 197, 1.1362 )
( 198, 1.1361 )
( 199, 1.1361 )
( 200, 1.136 )
};
\addplot[thick,color=blue] coordinates {
( 1, 6.5346 )
( 2, 4.5758 )
( 3, 2.5777 )
( 4, 1.5865 )
( 5, 1.4489 )
( 6, 1.3926 )
( 7, 1.3526 )
( 8, 1.3239 )
( 9, 1.2999 )
( 10, 1.2795 )
( 11, 1.2636 )
( 12, 1.2512 )
( 13, 1.2431 )
( 14, 1.2391 )
( 15, 1.2365 )
( 16, 1.2345 )
( 17, 1.2328 )
( 18, 1.2308 )
( 19, 1.2289 )
( 20, 1.2274 )
( 21, 1.2261 )
( 22, 1.2247 )
( 23, 1.2234 )
( 24, 1.2223 )
( 25, 1.2214 )
( 26, 1.2204 )
( 27, 1.2191 )
( 28, 1.2178 )
( 29, 1.2165 )
( 30, 1.2155 )
( 31, 1.2145 )
( 32, 1.2134 )
( 33, 1.2121 )
( 34, 1.2108 )
( 35, 1.2096 )
( 36, 1.2087 )
( 37, 1.2078 )
( 38, 1.207 )
( 39, 1.2062 )
( 40, 1.2056 )
( 41, 1.2051 )
( 42, 1.2044 )
( 43, 1.2032 )
( 44, 1.2021 )
( 45, 1.2009 )
( 46, 1.1995 )
( 47, 1.1985 )
( 48, 1.1978 )
( 49, 1.1972 )
( 50, 1.1965 )
( 51, 1.1958 )
( 52, 1.1952 )
( 53, 1.1946 )
( 54, 1.194 )
( 55, 1.1934 )
( 56, 1.193 )
( 57, 1.1926 )
( 58, 1.1921 )
( 59, 1.1916 )
( 60, 1.1912 )
( 61, 1.1908 )
( 62, 1.1903 )
( 63, 1.1898 )
( 64, 1.1894 )
( 65, 1.189 )
( 66, 1.1886 )
( 67, 1.1882 )
( 68, 1.1877 )
( 69, 1.1872 )
( 70, 1.1869 )
( 71, 1.1866 )
( 72, 1.1862 )
( 73, 1.1859 )
( 74, 1.1856 )
( 75, 1.1853 )
( 76, 1.185 )
( 77, 1.1846 )
( 78, 1.1838 )
( 79, 1.1824 )
( 80, 1.1808 )
( 81, 1.179 )
( 82, 1.1774 )
( 83, 1.1763 )
( 84, 1.1749 )
( 85, 1.1732 )
( 86, 1.1708 )
( 87, 1.168 )
( 88, 1.1653 )
( 89, 1.1631 )
( 90, 1.1619 )
( 91, 1.1613 )
( 92, 1.161 )
( 93, 1.1607 )
( 94, 1.1604 )
( 95, 1.1602 )
( 96, 1.1601 )
( 97, 1.1599 )
( 98, 1.1596 )
( 99, 1.1594 )
( 100, 1.1592 )
( 101, 1.159 )
( 102, 1.1588 )
( 103, 1.1586 )
( 104, 1.1584 )
( 105, 1.1582 )
( 106, 1.1579 )
( 107, 1.1576 )
( 108, 1.1574 )
( 109, 1.1572 )
( 110, 1.157 )
( 111, 1.1567 )
( 112, 1.1565 )
( 113, 1.1563 )
( 114, 1.156 )
( 115, 1.1556 )
( 116, 1.1553 )
( 117, 1.1548 )
( 118, 1.1545 )
( 119, 1.1543 )
( 120, 1.154 )
( 121, 1.1537 )
( 122, 1.1534 )
( 123, 1.1531 )
( 124, 1.1529 )
( 125, 1.1528 )
( 126, 1.1527 )
( 127, 1.1526 )
( 128, 1.1524 )
( 129, 1.1523 )
( 130, 1.1522 )
( 131, 1.1522 )
( 132, 1.1521 )
( 133, 1.1519 )
( 134, 1.1518 )
( 135, 1.1515 )
( 136, 1.1508 )
( 137, 1.1504 )
( 138, 1.1498 )
( 139, 1.1492 )
( 140, 1.1486 )
( 141, 1.1484 )
( 142, 1.1482 )
( 143, 1.1478 )
( 144, 1.1474 )
( 145, 1.1469 )
( 146, 1.1463 )
( 147, 1.1453 )
( 148, 1.144 )
( 149, 1.1425 )
( 150, 1.1409 )
( 151, 1.1395 )
( 152, 1.1387 )
( 153, 1.1383 )
( 154, 1.1382 )
( 155, 1.1381 )
( 156, 1.1381 )
( 157, 1.138 )
( 158, 1.1379 )
( 159, 1.1379 )
( 160, 1.1378 )
( 161, 1.1378 )
( 162, 1.1377 )
( 163, 1.1376 )
( 164, 1.1376 )
( 165, 1.1375 )
( 166, 1.1374 )
( 167, 1.1374 )
( 168, 1.1374 )
( 169, 1.1374 )
( 170, 1.1373 )
( 171, 1.1373 )
( 172, 1.1372 )
( 173, 1.1372 )
( 174, 1.1372 )
( 175, 1.1371 )
( 176, 1.1371 )
( 177, 1.1371 )
( 178, 1.137 )
( 179, 1.137 )
( 180, 1.137 )
( 181, 1.1369 )
( 182, 1.1369 )
( 183, 1.1369 )
( 184, 1.1368 )
( 185, 1.1368 )
( 186, 1.1368 )
( 187, 1.1368 )
( 188, 1.1368 )
( 189, 1.1368 )
( 190, 1.1367 )
( 191, 1.1367 )
( 192, 1.1367 )
( 193, 1.1367 )
( 194, 1.1367 )
( 195, 1.1367 )
( 196, 1.1367 )
( 197, 1.1367 )
( 198, 1.1366 )
( 199, 1.1366 )
( 200, 1.1366 )
};
\draw[red, thick]  (axis cs: 100,6.2) -- (axis cs: 115,6.2);
\draw[blue, thick] (axis cs: 100,5.5) -- (axis cs: 115,5.5);
\node[right] at (axis cs: 120, 6.2) {initial noisy image};
\node[right] at (axis cs: 120, 5.5) {initial random};
\end{axis}
\end{tikzpicture} 
}
\end{center}
\caption{TV$^p$ denoising with the Itoh \& Abe discrete gradient: energy decrease for the result in Figure~\ref{fig:legodenoiseTVp}.}
\label{fig:legodenoiseTVpenerg}
\end{figure}
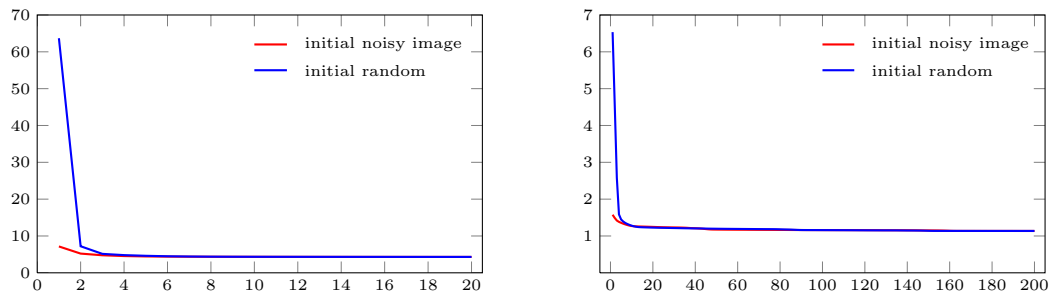

\section{Conclusion} \label{sec:conclusion}
We discussed discrete gradient methods, well-known in Geometric Numerical Integration for the preservation of dissipation in variational equations, with respect to their use in image processing. We assumed that $V$ is smooth which is sufficient when it comes to actual computations. However, preliminary considerations by the standard techniques suggest this assumption may be weakened significantly. Note also, that in this paper we consider gradient flows of $V$ with respect to the Euclidean inner product only. This can be however generalized, cf. \cite{AVF12}, to gradient flows with respect to other inner products as they appear in image processing such as $H^{-1}$ gradient flows \cite{burger2009cahn,osher2003image} or Wasserstein gradient flows \cite{benning2013primal,burger2012regularized,during2012high,ferradans2014regularized,lellmann2014imaging,peyre2012wasserstein,rabin2011wasserstein,schmitzer2013object}, just to name a few. We believe that the presented theory, that guarantees the convergence to the equilibrium of any discrete gradient method for a wide range of functionals and gradient flows used in image processing, as well as the conducted experiments indicate that discrete gradient methods could be very interesting for image processing tasks.  

\bibliographystyle{plain} 
\bibliography{lit}

\begin{thebibliography}{10}

\bibitem{AcVog94}
R.~Acar and C.~R. Vogel.
\newblock Analysis of bounded variation penalty methods for ill-posed problems.
\newblock {\em Inverse Problems}, 10(6):1217--1229, 1994.

\bibitem{benning2013primal}
M.~Benning, L.~Calatroni, B.~D{\"u}ring, and C.-B. Sch{\"o}nlieb.
\newblock A primal-dual approach for a total variation {W}asserstein flow.
\newblock In {\em Geometric Science of Information}, pages 413--421. Springer,
  2013.

\bibitem{bertozzi2007inpainting}
A.~L. Bertozzi, S.~Esedoglu, and A.~Gillette.
\newblock Inpainting of binary images using the {Cahn-Hilliard} equation.
\newblock {\em IEEE Transactions on image processing}, 16(1):285--291, 2007.

\bibitem{bertozzi2012diffuse}
A.~L. Bertozzi and A.~Flenner.
\newblock Diffuse interface models on graphs for classification of high
  dimensional data.
\newblock {\em Multiscale Modeling \& Simulation}, 10(3):1090--1118, 2012.

\bibitem{BloChan98}
P.~Blomgren and T.~F. Chan.
\newblock Total variation methods for restoration of vecor-valued images.
\newblock {\em IEEE Trans. Image Process}, 7:304--309, 1998.

\bibitem{burger2012regularized}
M.~Burger, M.~Franek, and C.-B. Sch{\"o}nlieb.
\newblock Regularized regression and density estimation based on optimal
  transport.
\newblock {\em Applied Mathematics Research eXpress}, 2012(2):209--253, 2012.

\bibitem{burger2009cahn}
M.~Burger, L.~He, and C.-B. Sch{\"o}nlieb.
\newblock {Cahn-Hilliard} inpainting and a generalization for grayvalue images.
\newblock {\em SIAM Journal on Imaging Sciences}, 2(4):1129--1167, 2009.

\bibitem{candes2006robust}
E.~J. Cand{\`e}s, J.~Romberg, and T.~Tao.
\newblock {Robust uncertainty principles: Exact signal reconstruction from
  highly incomplete frequency information}.
\newblock {\em Information Theory, IEEE Transactions on}, 52(2):489--509, 2006.

\bibitem{ChamLio97}
A.~Chambolle and P.~L. Lions.
\newblock Image recovery via total variational minimization and related
  problems.
\newblock {\em Numer. Math.}, 76:167--188, 1997.

\bibitem{chan1999convergence}
T.~F. Chan and P.~Mulet.
\newblock On the convergence of the lagged diffusivity fixed point method in
  total variation image restoration.
\newblock {\em SIAM Journal on Numerical Analysis}, 36(2):354--367, 1999.

\bibitem{ChanShenbook05}
T.~F. Chan and J.~Shen.
\newblock {\em Image processing and analysis: Variational, PDE, wavelet, and
  stochastic methods}.
\newblock Society for Industrial and Applied Mathematics (SIAM), Philadelphia,
  PA, 2005.

\bibitem{ChanVese01}
T.~F. Chan and L.~A. Vese.
\newblock Active contours without edges.
\newblock {\em IEEE Trans. Image Process.}, 10(2):266--277, 2001.

\bibitem{Cie13}
J.~L. Cie{\'s}li{\'n}ski.
\newblock Locally exact modifications of discrete gradient schemes.
\newblock {\em Phys. Lett. A}, 377(8):592--597, 2013.

\bibitem{CoHai11}
D.~Cohen and E.~Hairer.
\newblock Linear energy-preserving integrators for {P}oisson systems.
\newblock {\em BIT}, 51(1):91--101, 2011.

\bibitem{during2012high}
B.~D{\"u}ring and C.-B. Sch{\"o}nlieb.
\newblock A high-contrast fourth-order pde from imaging: numerical solution by
  {ADI} splitting.
\newblock {\em Multi-scale and High-Contrast Partial Differential Equations, H.
  Ammari et al.(eds.)}, pages 93--103, 2012.

\bibitem{AVF12}
{E.~Celledoni, V.~Grimm, R.~I.~McLachlan, D.~I.~McLaren, D.~O'Neale, B.~Owren
  and G.~R.~W.~Quispel}.
\newblock Preserving energy resp. dissipation in numerical {PDE}s using the
  ``average vector field'' method.
\newblock {\em J. Comput. Phys.}, 231(20):6770--6789, 2012.

\bibitem{ferradans2014regularized}
S.~Ferradans, N.~Papadakis, G.~Peyr{\'e}, and J.-F. Aujol.
\newblock Regularized discrete optimal transport.
\newblock {\em SIAM Journal on Imaging Sciences}, 7(3):1853--1882, 2014.

\bibitem{Gonzales96}
O.~Gonzalez.
\newblock Time integration and discrete {H}amiltonian systems.
\newblock {\em J. Nonlinear Sci.}, 6(5):449--467, 1996.

\bibitem{GriHeWi06}
V.~Grimm, S.~Henn, and K.~Witsch.
\newblock A higher-order {PDE}-based image registration approach.
\newblock {\em Numer. Linear Algebra Appl.}, 13(5):399--417, 2006.

\bibitem{HaiLub13}
E.~Hairer and Ch. Lubich.
\newblock {Energy-diminishing integration of gradient systems}.
\newblock {\em IMA Journal of Numerical Analysis}, 34:452--461, 2014.

\bibitem{GNI2}
E.~Hairer, Ch. Lubich, and G.~Wanner.
\newblock {\em Geometric numerical integration}, volume~31 of {\em Springer
  Series in Computational Mathematics}.
\newblock Springer, Heidelberg, 2010.
\newblock Second Edition.

\bibitem{HNLdeblur06}
P.~C. Hansen, J.~G. Nagy, and D.~P. O'Leary.
\newblock {\em Deblurring images; Matrices, spectra, and filtering}, volume~3
  of {\em Fundamentals of Algorithms}.
\newblock Society for Industrial and Applied Mathematics (SIAM), Philadelphia,
  PA, 2006.

\bibitem{HeStiCG}
M.~R. Hestenes and E.~Stiefel.
\newblock Methods of conjugate gradients for solving linear systems.
\newblock {\em J. Research Nat. Bur. Standards}, 49:409--436 (1953), 1952.

\bibitem{hintermüller2013nonconvex}
M.~Hinterm{\"u}ller and T.~Wu.
\newblock Nonconvex {$\mbox{TV}^q$}-models in image restoration: Analysis and a
  trust-region regularization--based superlinearly convergent solver.
\newblock {\em SIAM Journal on Imaging Sciences}, 6(3):1385--1415, 2013.

\bibitem{ItohAbe88}
T.~Itoh and K.~Abe.
\newblock Hamiltonian-conserving discrete canonical equations based on
  variational difference quotients.
\newblock {\em J. Comput. Phys.}, 76(1):85--102, 1988.

\bibitem{lellmann2014imaging}
J.~Lellmann, D.~A. Lorenz, C.-B. Schönlieb, and T.~Valkonen.
\newblock {Imaging with Kantorovich--Rubinstein Discrepancy}.
\newblock {\em SIAM Journal on Imaging Sciences}, 7(4):2833--2859, 2014.

\bibitem{MatFuri01}
T.~Matsuo and D.~Furihata.
\newblock Dissipative or conservative finite-difference schemes for
  complex-valued nonlinear partial differential equations.
\newblock {\em J. Comput. Phys.}, 171(2):425--447, 2001.

\bibitem{SixLec}
R.~McLachlan and R.~Quispel.
\newblock Six lectures on the geometric integration of {ODE}s.
\newblock In {\em Foundations of computational mathematics ({O}xford, 1999)},
  volume 284 of {\em London Math. Soc. Lecture Note Ser.}, pages 155--210.
  Cambridge Univ. Press, Cambridge, 2001.

\bibitem{McLachlanQusipel99}
R.~I. McLachlan, G.~R.~W. Quispel, and N.~Robidoux.
\newblock Geometric integration using discrete gradients.
\newblock {\em R. Soc. Lond. Philos. Trans. Ser. A Math. Phys. Eng. Sci.},
  357(1754):1021--1045, 1999.

\bibitem{MiRatTan07}
O.~Michailovich, Y.~Rathi, and A.~Tannenbaum.
\newblock Image segmentation using active contours driven by the
  {B}hattacharyya gradient flow.
\newblock {\em IEEE Trans. Image Process.}, 16(11):2787--2801, 2007.

\bibitem{nikolova2010fast}
M.~Nikolova, M.~K. Ng, and C.-P. Tam.
\newblock Fast nonconvex nonsmooth minimization methods for image restoration
  and reconstruction.
\newblock {\em Image Processing, IEEE Transactions on}, 19(12):3073--3088,
  2010.

\bibitem{osher2003image}
S.~Osher, A.~Sol{\'e}, and L.~Vese.
\newblock Image decomposition and restoration using total variation
  minimization and the {H{$^{-1}$}}.
\newblock {\em Multiscale Modeling \& Simulation}, 1(3):349--370, 2003.

\bibitem{PMscale}
P.~Perona and J.~Malik.
\newblock Scale-space and edge detection using anisotropic diffusion.
\newblock {\em IEEE Transactions on Pattern Analysis and Machine Intelligence},
  12:629--639, 1990.

\bibitem{peyre2012wasserstein}
G.~Peyr{\'e}, J.~Fadili, and J.~Rabin.
\newblock Wasserstein active contours.
\newblock In {\em Image Processing (ICIP), 2012 19th IEEE International
  Conference on}, pages 2541--2544. IEEE, 2012.

\bibitem{QuiTu96}
G.~R.~W. Quispel and G.~S. Turner.
\newblock Discrete gradient methods for solving {ODE}s numerically while
  preserving a first integral.
\newblock {\em J. Phys. A}, 29(13):L341--L349, 1996.

\bibitem{rabin2011wasserstein}
J.~Rabin, G.~Peyr{\'e}, J.~Delon, and M.~Bernot.
\newblock Wasserstein barycenter and its application to texture mixing.
\newblock In {\em Scale Space and Variational Methods in Computer Vision},
  pages 435--446. Springer, 2011.

\bibitem{RuOshFa92}
L.~I. Rudin, S.~Osher, and E.~Fatemi.
\newblock Nonlinear total variation based noise removal algorithms.
\newblock {\em Physica D: Nonlinear Phenomena}, 60:259--268, 1992.

\bibitem{schmitzer2013object}
B.~Schmitzer and Ch. Schn{\"o}rr.
\newblock Object segmentation by shape matching with {W}asserstein modes.
\newblock In {\em Energy Minimization Methods in Computer Vision and Pattern
  Recognition}, pages 123--136. Springer, 2013.

\bibitem{StrDrRu04}
R.~Strzodka, M.~Droske, and M.~Rumpf.
\newblock Image registration by a regularized gradient flow. {A} streaming
  implementation in {DX}9 graphics hardware.
\newblock {\em Computing}, 73(4):373--389, 2004.

\bibitem{StuHam96}
A.~M. Stuart and A.~R. Humphries.
\newblock {\em Dynamical systems and numerical analysis}, volume~2 of {\em
  Cambridge Monographs on Applied and Computational Mathematics}.
\newblock Cambridge University Press, Cambridge, 1996.

\bibitem{USC-SIPIbase}
{The USC-SIPI Image Database, available at:
  http://sipi.usc.edu/services/database/Database.html}.

\bibitem{Weickertbook98}
J.~Weickert.
\newblock {\em Anisotropic diffusion in image processing}.
\newblock European Consortium for Mathematics in Industry. B. G. Teubner,
  Stuttgart, 1998.

\bibitem{XuPri98}
Ch. Xu and J.~L. Prince.
\newblock Snakes, shapes, and gradient vector flow.
\newblock {\em IEEE Trans. Image Process.}, 7(3):359--369, 1998.

\end{thebibliography}

\end{document}